\documentclass[12pt]{article}

\usepackage{amsmath, amssymb, amsthm}
\usepackage{geometry}
\usepackage{hyperref}
\usepackage{enumitem}
\usepackage{xcolor}
\usepackage{graphicx}

\DeclareMathOperator{\End}{end}
\DeclareMathOperator{\dist}{dist}

\newtheorem{theorem}{Theorem}[section]
\newtheorem{lemma}[theorem]{Lemma}
\newtheorem{proposition}[theorem]{Proposition}
\newtheorem{corollary}[theorem]{Corollary}

\newtheorem{claim}{Claim}[theorem]

\theoremstyle{definition}
\newtheorem{definition}[theorem]{Definition}

\newtheorem{remark}[theorem]{Remark}

\title{On graphs without cycles of length $0$ modulo $3$ or $4$ modulo $6$}
\author{Takahiro Ueoro \thanks{Yokohama National University, Japan. \texttt{ueoro-takahiro-wc@ynu.jp}. }}
\date{\today}

\begin{document}

\maketitle

%abst
\begin{abstract}
We study graphs containing no cycle whose length is divisible by $3$ or congruent to $4$ modulo $6$. 
We prove that every such $n$-vertex graph $G$, where $n \ge 2$, satisfies $e(G) \le (11/8)n-7/4$. 
Moreover, equality holds if and only if $n=8k+2$ for some nonnegative
integer $k$ and $G$ is isomorphic to the explicitly constructed graph $H_k$. 
We also construct, for every $n\geq2$, an $n$-vertex graph with $\left\lfloor (11/8)n-7/4 \right\rfloor$ 
edges satisfying the same cycle restriction. 
Consequently, this is the exact maximum number of edges for every $n \ge 2$. 
\end{abstract}

%intro
\section{Introduction}
Extremal problems concerning graphs with forbidden cycles of prescribed lengths are fundamental topics in extremal graph theory and have been widely studied. 
One such problem is to determine the maximum number of edges in a graph containing no cycle whose length belongs to a prescribed residue class modulo $k$, and this problem has attracted considerable attention in recent years. 

Let $\mathcal{F}$ be a set of graphs. 
A graph is called \emph{$\mathcal{F}$-free} if it contains no member of $\mathcal{F}$ as a subgraph. 
We denote by $ex(n, \mathcal{F})$ the maximum number of edges in an $n$-vertex \emph{$\mathcal{F}$-free} graph. 
Let $k$ be a positive integer, and let $0\leq \ell_1<\ell_2<\cdots<\ell_m<k$. 
A cycle whose length is congruent modulo $k$ to one of $\ell_1,\ell_2,\ldots,\ell_m$ is called an \emph{$(\ell_1,\ell_2,\ldots,\ell_m \bmod k)$-cycle}. 
We denote by $\mathcal{C}_{\ell_1,\ell_2,\ldots,\ell_m \bmod k}$ the family of all $(\ell_1,\ell_2,\ldots,\ell_m \bmod k)$-cycles. 
%In particular, when $m=1$, we write $\mathcal{C}_{\ell_1 \bmod k}$. 
For example, we have $ex(n, \mathcal{C}_{0 \bmod 1}) = n - 1$ since a graph contains no $(0 \bmod 1)$-cycles if and only if it is a forest. 
 
When $(k, \ell) \equiv (0,1) \pmod{2}$, every bipartite graph, and in particular the complete bipartite graph $K_{\lfloor n/2 \rfloor,\, \lceil n/2 \rceil}$, contains no $(\ell \bmod k)$-cycles.  
Therefore,
$ex(n, \mathcal{C}_{\ell \bmod k}) \ge \left\lfloor n/2 \right\rfloor \left\lceil n/2 \right\rceil = \left\lfloor n^2/4 \right\rfloor$. 
Furthermore, by the theorem of Simonovits~\cite{Simonovits1974},  
when $n$ is sufficiently large, we have
\[
ex(n, \mathcal{C}_{\ell \bmod k}) = \left\lfloor \frac{n^{2}}{4} \right\rfloor.
\]
Thus this case is essentially settled. 

Throughout this section, we assume that $(k,\ell) \not\equiv (0,1) \pmod{2}$.  
Burr and Erd\H{o}s~\cite{Erdos1976} conjectured that $ex(n, \mathcal{C}_{\ell \bmod k}) = O(n)$,  
and this was proved by Bollob\'{a}s~\cite{Bollobas1977}.
Let $c_{\ell,k}$ denote the supremum of the set 
$\left\{ ex(n, \mathcal{C}_{\ell \bmod k})/n \,\middle|\, n \text{ is a positive integer} \right\}$.
Various upper bounds on $c_{\ell,k}$ have been improved in various ways, and several results are now known. 
For example, Sudakov and Verstra\"{e}te~\cite{Sudakov2017} showed that for $k > \ell \ge 3$,
\[\frac{ex(k, C_{\ell})}{k} \le c_{\ell,k} \le \frac{96 \cdot ex(k, C_{\ell})}{k}\]
Therefore, $c_{\ell,k}$ and $ex(k, C_{\ell})/k$ differ only by a constant factor.

There are not many pairs $(k,\ell)$ for which the value of $c_{\ell,k}$ is known.
It is well known that $c_{0,1} = 1$ and $c_{0,2} = 3/2$. 
Chen and Saito~\cite{Chen1994} proved that every graph with minimum degree at least $3$ contains a $(0 \bmod 3)$-cycle. This result implies that $c_{0,3} = 2$. 
Bai, Li, Pan and Zhang~\cite{Li2025} proved that if $G$ is an $n$-vertex $\mathcal{C}_{1 \bmod 3}$-free graph, then $e(G) \le 5(n-1)/3$. Moreover, equality holds if and only if $9 \mid (n-1)$ and every block of $G$ is isomorphic to the Petersen graph. This result implies that $c_{1,3} = 5/3$. 
Gy\H{o}ri, Li, Salia, Tompkins, Varga and Zhu~\cite{Gyori2026} proved that if $G$ is an $n$-vertex $\mathcal{C}_{0 \bmod 4}$-free graph, then $e(G) \le 19(n-1)/12$. Moreover, they constructed infinitely many extremal examples. This result implies that $c_{0,4} = 19/12$. 
Cai and Shreve~\cite{Cai2001} proved that every $2$-connected graph on at least six vertices with minimum degree at least $3$ contains a $(2 \bmod 4)$-cycle. By a standard induction on the block decomposition, their result implies that $e(G) \le 5(n-1)/2$ for every $n$-vertex $\mathcal{C}_{2 \bmod 4}$-free graph $G$, with equality if and only if $G$ is connected and every block of $G$ is isomorphic to $K_5$. In particular, $c_{2,4}=5/2$. 
More recently, Gao, Li, Ma and Xie~\cite{Gao2024} proved the stronger result that every $n$-vertex graph $G$ with at least $5(n-1)/2$ edges contains two cycles of consecutive even lengths unless $G$ is connected and every block of $G$ is isomorphic to $K_5$.

Besides the results mentioned above, the values of $c_{\ell,k}$ have been determined for infinitely many pairs $(k, \ell)$. Gao, Huo, Liu and Ma~\cite{Gao2022} proved that, when $k \ge 3$, every graph with minimum degree at least $k+1$ contains an $(\ell \bmod k)$-cycle. This result implies that $c_{\ell,k} \le k$ for $k \ge 3$. Moreover, since $K_{k,n-k}$ contains no $(2 \bmod k)$-cycle when $k$ is odd, it implies that $c_{2,k} = k$ for every odd integer $k \ge 3$. 
Bai, Grzesik, Li and Prorok~\cite{Bai2025} proved that, when $k \ge 4$, every $2$-connected $n$-vertex graph $G$ with minimum degree at least $k$ contains an $(\ell \bmod k)$-cycle unless $G$ is isomorphic to $K_{k+1}$ or $K_{k,n-k}$. 
Using this result, they also showed that $c_{\ell,k} \le k-1$ whenever $\ell \neq 2$. Furthermore, since $K_{k-1,n-k+1}$ contains no $(0 \bmod k)$-cycle when $k$ is odd, it implies that $c_{0,k}=k-1$ for every odd integer $k \ge 5$. 

Summarizing the results mentioned above, the values of $c_{\ell,k}$ have been determined when $k \le 4$, and when $k \ge 5$ is odd with $\ell \in \{0,2\}$. 

The corresponding extremal problem has also been studied within the class of $2$-connected graphs. 
Chu, Park and Ryu~\cite{Chu2025} proved that every $2$-connected $n$-vertex graph containing no $(0 \bmod 4)$-cycle has at most $ \lfloor (3n-1)/2 \rfloor $ edges, and constructed infinitely many graphs attaining this bound. 
More recently, Bai, Chu, Li, Park and Ryu~\cite{Bai2026} determined the exact maximum number of edges and characterized all extremal graphs in the $2$-connected setting for each of the two cases of forbidding $(1 \bmod 3)$-cycles and $(2 \bmod 4)$-cycles. 
These results show that imposing $2$-connectivity can change both the maximum number of edges and the structure of the extremal graphs. 
 
Most previous work has focused on forbidding a single congruence class of cycle lengths. 
It is natural to consider several classes simultaneously, since their interaction may lead to stronger upper bounds on the number of edges and new extremal structures. 

In this paper, we study graphs containing no cycle whose length is divisible by $3$ or congruent to $4$ modulo $6$; equivalently, graphs containing no $(0,3,4 \bmod 6)$-cycle. 
We determine the exact maximum number of edges in such graphs and characterize all extremal graphs. 
To the best of our knowledge, this is the first systematic study of the extremal consequences of simultaneously forbidding multiple congruence classes of cycle lengths. 
We hope that this work will stimulate further research in this direction. 

Before stating the main theorem, we define the graph $H_k$. 

\begin{definition}
Let $A$ be the graph with two designated vertices $x$ and $y$ shown in the upper right of Figure~\ref{fig_Hk}. 
For a positive integer $k$, take $k$ copies $A_1,\ldots,A_k$ of $A$, and identify all vertices corresponding to $x$ into a single vertex denoted by $x_k$. 
Similarly, identify all vertices corresponding to $y$ into a single vertex denoted by $y_k$. 
The graph obtained in this way, together with the additional edge $x_k y_k$, is denoted by $H_k$. 
We also define $H_0 = K_2$. 
\end{definition}

%%%%%%%%%%%%
\begin{figure}[htbp]
\centering
\includegraphics[width = 10cm]{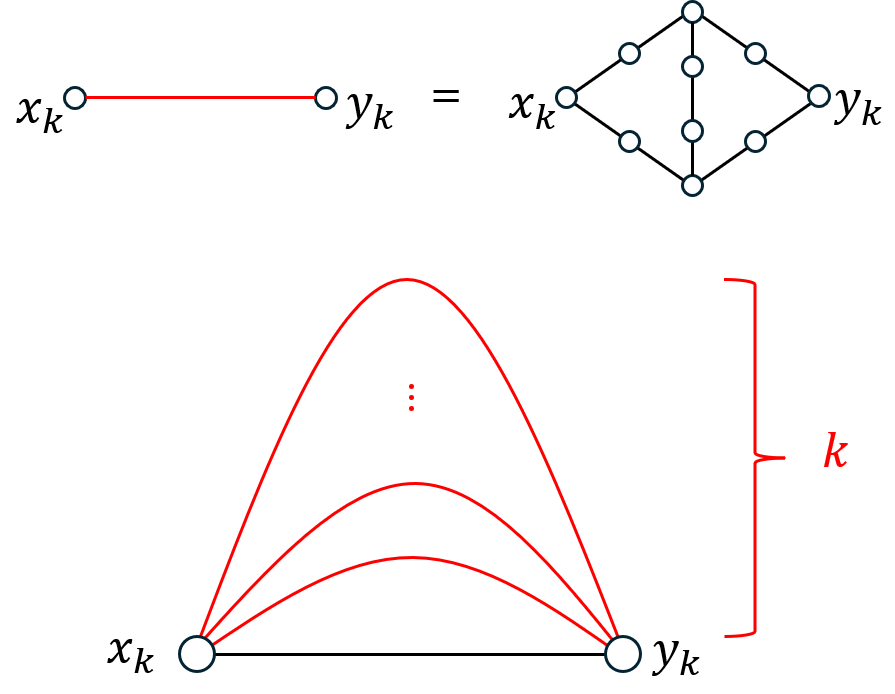}
\caption{The graph $A$ (upper right) and the graph $H_k$ (bottom). }
\label{fig_Hk}
\end{figure}
%%%%%%%%%%%%

\begin{remark} \label{rmk:Hk}
We have $|V(H_k)| = 8k + 2$ and $e(H_k) = 11k + 1$. 
Every cycle of $H_k$ has length in $\{5,7,8,11,14\}$. 
In particular, $H_k$ contains no $(0,3,4 \bmod{6})$-cycle.  

For each positive integer $k$, every face of $H_k$ is either a $5$-face, a $7$-face, or an $8$-face. 
The two faces incident with the edge $x_k y_k$ are both $5$-faces, whereas every edge other than $x_k y_k$ is incident with a $7$-face. 
Moreover, every vertex of $H_k$ lies on a $7$-face. 
\end{remark}

\begin{theorem}[Main Theorem] \label{main}
Let $n \geq 2$, and let $G$ be an $n$-vertex graph containing no $(0,3,4 \bmod 6)$-cycle. 
Then
\[
e(G) \leq \frac{11}{8}n - \frac{7}{4}.
\]
Moreover, equality holds if and only if $n = 8k + 2$ for some nonnegative integer $k$ 
and $G$ is isomorphic to $H_k$. 
\end{theorem}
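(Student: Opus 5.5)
We argue by induction on $n$. The first step is to reduce to the $2$-connected case. The bound $f(n)=\frac{11}{8}n-\frac74$ satisfies $f(n_1)+f(n_2)=f(n_1+n_2-1)-\frac38 < f(n_1+n_2-1)$. So if $G$ is disconnected, or has a cut vertex splitting it into pieces on $n_1,n_2\ge2$ vertices, the inequality follows from induction with strict inequality. Isolated vertices and pendant pieces are handled by $f(n)\ge f(n-1)+\frac{11}{8}$ together with the trivial estimate. Hence equality forces $G$ to be $2$-connected, and $K_2=H_0$ is the base case. From now on $G$ is $2$-connected with $n\ge3$.

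\textbf{Structure of $2$-connected graphs in the class.} We would establish the following facts about a $2$-connected graph $G$ with no $(0,3,4 \bmod 6)$-cycle.
\begin{itemize}[label=$\circ$]
\item The girth is at least $5$, since $C_3$ and $C_4$ are forbidden.
\item $G$ contains no theta subgraph whose three paths have lengths making all three cycles admissible. Allowed cycle lengths are $\equiv 1,2,5 \pmod 6$. If the three path lengths are $a,b,c$, then $a+b$, $b+c$ and $a+c$ must all lie in $\{1,2,5\} \bmod 6$. Since the three sums have the same parity, the residue analysis is very restrictive.
\item Using this, we would show that $G$ has no subdivision of $K_4$ except very special ones, and that $G$ is planar; Remark~\ref{rmk:Hk} suggests planar extremal graphs. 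Concretely, we would prove that every $2$-connected such graph is either a cycle or has an ear decomposition in which each ear has length restricted modulo $6$. We would also show that the subdivided $K_4$ and $K_{3,3}$ configurations available are constrained so tightly that $G$ is a series–parallel-like gluing of copies of a few small pieces, such as $A$.
\end{itemize}

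\textbf{Counting.} With planarity in hand, we count via faces. Every face has length at least $5$ and length $\not\equiv 0,3,4 \pmod 6$, so faces have lengths in $\{5,7,8,11,\dots\}$. Euler's formula gives $e \le \frac{g}{g-2}(n-2)$ with $g=5$, that is, $e\le\frac53(n-2)$, which is too weak. So we must show that $5$-faces are rare. Two adjacent $5$-faces sharing an edge create an $8$-cycle, which is allowed, but a $5$-face adjacent to another $5$-face along a path of length $2$ creates a $6$-cycle. More precisely, we would run a discharging argument. Each face $F$ receives charge $|F|-\frac{11\cdot 2}{11-8}\cdot$(appropriate normalization) so that the target inequality $8e\le 11n-14$ becomes $\sum_F(\alpha|F|-\beta)\ge$ a constant. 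Then we prove that $5$-faces must be adjacent to long faces, essentially $7$-faces, and that charge transfers balance exactly when the local structure is that of $A$.

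Rather than rely on planarity, which may fail, a more robust alternative is to take an ear decomposition $P_1,\dots,P_t$ starting from a cycle. Then $e-n=t-1$, and we would show that ears carry on average at least $\frac83$ new internal vertices. Specifically, an ear with $j$ internal vertices creates with existing paths cycles of constrained lengths. Short ears ($j\le2$) are forced to be followed or preceded by long ones, which gives the amortized inequality $8(e-n)\le 3n-14$.

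\textbf{Main obstacle.} The hardest step is this amortization or discharging, together with the equality characterization. Equality must force every ear or face configuration to be tight, meaning every non-central edge lies on a $7$-face and both faces at the special edge are $5$-faces. We would then show that the tight pieces are exactly copies of $A$ hung on a common pair $x,y$ with the edge $xy$, yielding $H_k$ and $n=8k+2$. To attack it, we would first classify all $2$-connected members with small $n$ (up to about $12$) by hand to identify $A$ as the unique densest "unit". We would then prove a reduction lemma: if $G$ contains a $2$-cut $\{x,y\}$, splitting along it and adding the edge $xy$ preserves the property. This is checked via the theta-residue constraint on $x$–$y$ path lengths. Induction then applies to the pieces, leaving $3$-connected graphs. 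For those, we expect to show that none exist beyond small exceptions, by the theta argument.
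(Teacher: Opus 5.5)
Apart from the reduction to the $2$-connected case, which is correct, your proposal states intentions rather than arguments. The one concrete tool it commits to is false.

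\textbf{Planarity and counting.} Planarity is never proved (you even hedge that it ``may fail''), although all of your counting is face-based. The paper obtains planarity from Kuratowski's theorem. It shows that every subdivision of $K_5$ has a $(0 \bmod 3)$-cycle, via a Ramsey $2$-coloring of the branch paths by length mod $3$. It shows that every subdivision of $K_{3,3}$ has a $(0,4 \bmod 6)$-cycle, via a mod-$6$ analysis of the matrix of $4$-cycle lengths. Your ear-decomposition fallback does not help: ``on average at least $\frac83$ internal vertices per ear'' is just $8(e-n)\le 3n-14$, i.e.\ the theorem restated. Even granting planarity, the bound rests on two structural facts your sketch never identifies.
\begin{enumerate}
\item Two $7$-faces sharing an edge share a path of length exactly $3$, and no $7$-face shares edges with two others. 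Hence $f_7\le\frac{2}{11}e$.
\item There are at most two $5$-faces, and they share an edge. Hence $3f_5+f_7\le\frac{2}{11}e+\frac{64}{11}$, after which Euler's formula gives $e\le\frac{11}{8}n-\frac74$.
\end{enumerate}
The second fact is false for general $2$-connected members of the class. Take two vertex-disjoint pentagons and join an edge of one to an edge of the other by two disjoint paths of length $3$; the only cycle lengths are $5,8,11,14$. So this fact cannot come from local discharging. It holds only for an edge-maximal counterexample, and proving it is where the paper's real work lies.

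\textbf{The reduction lemma and the equality case.} Your reduction lemma (split along a $2$-cut $\{x,y\}$ and add $xy$ to each side) does not preserve the property. In $H_1$, with $A$ labeled as in the last section, $\{v_4,v_7\}$ is a $2$-cut. Adding $v_4v_7$ to the side $v_4v_1v_2v_7$ creates a $4$-cycle. On the other side, the path $v_4v_3x_1y_1v_8v_7$ of length $5$ yields a $6$-cycle. The theta constraint only controls $l(Q)+l(R) \bmod 6$ for $(x,y)$-paths $Q,R$ on opposite sides, not $l(Q)+1$.

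The paper's surgery works only for the particular $2$-cut produced by two $5$-faces that do not share an edge. There, every $(x,y)$-path in $G_i$ has a fixed residue mod $3$, and the two residues sum to $2$.
\begin{enumerate}
\item In case $(1,1)$, one adds $xy$ to $G$ itself, contradicting maximality.
\item In case $(0,2)$, one attaches a path $xzy$ of length $2$ to $G_1$ and identifies $x$ with $y$ in $G_2$. Induction gives the bound. Equality would make $G_1$ plus the path $xzy$ isomorphic to $H_k$ with every cycle through $z$ of length $\equiv 2 \pmod 3$. This is impossible, since every vertex of $H_k$ lies on a $7$-face.
\end{enumerate}
Finally, the equality characterization is left entirely open. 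The paper deletes the common edge of the two $5$-faces and the common $3$-paths of the paired $7$-faces. This gives an $8$-angulation with bipartite dual, girth $8$, and no $10$- or $12$-cycles. The paper proves such a graph is a $\theta_4$-graph, and then determines where the antipodal paths attach. Hand-checking small cases does not replace this.
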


\begin{corollary}\label{cor_extremal}
For every integer $n\geq 2$, 
\[
  ex(n,\mathcal{C}_{0,3,4 \bmod{6}})
  =
  \left\lfloor
    \frac{11}{8}n-\frac{7}{4}
  \right\rfloor.
\]
\end{corollary}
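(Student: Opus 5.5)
The upper bound is immediate from Theorem~\ref{main}. For any $n\ge 2$ and any $n$-vertex $\mathcal{C}_{0,3,4 \bmod 6}$-free graph $G$ we have $e(G)\le \frac{11}{8}n-\frac74$, and $e(G)$ is an integer, so $e(G)\le \lfloor \frac{11}{8}n-\frac74\rfloor$. The whole content of Corollary~\ref{cor_extremal} is therefore the matching construction: for every $n\ge 2$ we need an $n$-vertex $\mathcal{C}_{0,3,4 \bmod 6}$-free graph with exactly $f(n):=\lfloor \frac{11}{8}n-\frac74\rfloor$ edges.

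\textbf{Reduction to blocks.} The construction will use the fact that every cycle lies inside a single block. Hence a graph is $\mathcal{C}_{0,3,4 \bmod 6}$-free exactly when each of its blocks is. So we may glue building blocks at cut vertices; this adds vertex counts minus one per gluing and adds edge counts.

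\textbf{Available blocks.} By Remark~\ref{rmk:Hk}, the graph $H_k$ is admissible, with $8k+2$ vertices and $11k+1$ edges. Other admissible blocks are:
\begin{itemize}
\item $K_2$;
\item $C_5$, $C_7$, $C_8$;
\item the blocks $A\cup\{xy\}$, i.e.\ $H_1$, with $10$ vertices and $12$ edges;
\item suitable theta graphs, i.e.\ two vertices joined by three internally disjoint paths whose pairwise length sums avoid $0,3,4 \bmod 6$, such as path lengths $(1,4,7)$.
\end{itemize}

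\textbf{Counting.} Write $n=8k+2+r$ with $0\le r\le 7$. Then
\[
f(n)=11k+1+\lfloor 11r/8\rfloor,
\]
and the increments $\lfloor 11r/8\rfloor$ are $0,1,2,4,5,6,8,9$ for $r=0,\dots,7$. Attaching a pendant tree on $r$ new vertices gives only $r$ new edges. This already settles $r\in\{0,1,2\}$.

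The other cases need gadgets with surplus edges. For example, when $r=4$, gluing a $C_5$ at a vertex of $H_k$ adds $4$ vertices and $5$ edges, which is exactly what is needed. I would also allow $H_k$ to be replaced by $H_{k-1}$ or $H_{k-2}$, so that the gadget has $r+8$ or $r+16$ new vertices. For each residue class $r\in\{3,5,6,7\}$ I would then search for a combination of the admissible blocks above that realizes the required edge surplus.

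\textbf{Main obstacle.} I expect the main difficulty to be the residues $r=3,6,7$. A simple computation shows that plain cut-vertex gluing loses $3/8$ per gluing relative to the bound. Indeed, for $n=n_1+n_2-1$,
\[
\Bigl(\tfrac{11}{8}n_1-\tfrac74\Bigr)+\Bigl(\tfrac{11}{8}n_2-\tfrac74\Bigr)=\tfrac{11}{8}n-\tfrac74-\tfrac38 .
\]
So for these residues some gadget must be $2$-connected together with part of $H_k$, not merely hung on a cut vertex.

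The first thing I would try is to add a new path of length $\ell$ between the two ends $x_k,y_k$ of the special edge. The new cycles this creates have lengths $\ell+1$ and $L-1+\ell$, where $L$ ranges over the lengths of cycles of $H_k$ through $x_ky_k$. I would determine these lengths $L$ from the face structure in Remark~\ref{rmk:Hk}, namely the two $5$-faces at $x_ky_k$. I would then choose $\ell$, or several parallel $x_k$--$y_k$ paths, so that all new cycle lengths avoid $0,3,4 \bmod 6$. Finally, each of the eight residue cases would be verified by listing cycle lengths blockwise.

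The small values $n<10$ would be handled directly by the same gadgets built on $H_0=K_2$.
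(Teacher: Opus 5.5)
You correctly get the upper bound, the count $f(8k+2+r)=11k+1+\lfloor 11r/8\rfloor$, and the cases $r\in\{0,1,2,4\}$. You also correctly observe that for $r=3,6$ the slack ($1/8$ and $1/4$) is below the $3/8$ lost per cut-vertex gluing. But the proposal stops where the corollary's content lies. For $r\in\{3,5,6,7\}$ you give no graph, only a search plan, so the lower bound is not proved.

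Your proposed way of finding an admissible $\ell$ is also insufficient. The two $5$-faces at $x_ky_k$ only give $L=5$. The length list $\{5,7,8,11,14\}$ in Remark~\ref{rmk:Hk} does not exclude $L\in\{7,11\}$, i.e.\ $x_k$--$y_k$ paths of length $6$ or $10$. With only that information the smallest usable $\ell\ge 2$ is $7$, which adds $6$ vertices but only $7$ edges and cannot give $r=6$. You need the actual structure of $A$: its edges are partitioned by the paths $xv_3v_4v_5y$, $xv_6v_7v_8y$, $v_4v_1v_2v_7$. Hence every $x$--$y$ path in $A$ has length $4$ or $7$, and every $x_k$--$y_k$ path in $H_k$ has length $1$, $4$ or $7$.

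With this fact your idea goes through. A new $x_k$--$y_k$ path of length $\ell$ creates cycles of lengths $\ell+1,\ell+4,\ell+7$. These avoid $0,3,4 \pmod 6$ exactly when $\ell\equiv 1\pmod 3$, so $\ell=4$ is admissible.
\begin{itemize}
\item One such path adds $3$ vertices and $4$ edges, giving $r=3$.
\item Two internally disjoint ones add $6$ vertices and $8$ edges; the only additional cycle is the $8$-cycle they form together. This gives $r=6$.
\item Pendant vertices then give $r=5,7$; for $r=5$ a glued $C_5$ plus a pendant vertex also works.
\item For $k=0$ you get $C_5$ and the theta graph with path lengths $1,4,4$.
\end{itemize}

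The paper goes the other way and needs no cycle checking. It takes $k=\lceil (n-2)/8\rceil$ and deletes $v_1,\dots,v_r$, $r=8k+2-n$, from one copy of $A$. The result is a subgraph of $H_k$, hence automatically admissible, and one only counts that $0,2,3,5,6,7,9,10$ edges are lost. The two constructions in fact agree: $H_k-\{v_1,v_2\}$ is $H_{k-1}$ plus two $x_k$--$y_k$ paths of length $4$, and $H_k-\{v_1,\dots,v_5\}$ is $H_{k-1}$ plus one. So the paper's deletion viewpoint is simply the cleanest way to certify your augmented graphs.
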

One of the key steps in the proof of Theorem~\ref{main} is to prove that every graph containing no $(0,3,4 \bmod 6)$-cycle is planar. 
This step is motivated by the result in~\cite{Gyori2026} stating that every graph containing no $(0 \bmod 4)$-cycle is planar. 
Our proof, however, differs from the one given there and takes a more algebraic approach. 

We also compare Theorem~\ref{main} with related extremal problems obtained by forbidding fewer residue classes. 
By a result in~\cite{Chen1994}, for $n \ge 3$, 
\[ ex(n,\mathcal{C}_{0,3 \bmod 6}) = ex(n,\mathcal{C}_{0 \bmod 3}) = 2n-4. \]
Thus the coefficient of $n$ is larger than the coefficient $11/8$ appearing in Theorem~\ref{main}. 

On the other hand, the exact values of 
\[ ex(n,\mathcal{C}_{0,4 \bmod 6})
\quad \text{and} \quad
ex(n,\mathcal{C}_{3,4 \bmod 6})
\]
are not known. 
However, some lower bounds are easy to obtain. 
First, let $G$ consist of a cactus of $\left\lfloor (n-1)/2 \right\rfloor$ triangles, together with isolated vertices if necessary. 
Every cycle of $G$ is a triangle, and hence $G$ contains no $(0,4 \bmod 6)$-cycle. Therefore, 
\[ 
ex(n,\mathcal{C}_{0,4 \bmod 6}) \ge 3\left\lfloor\frac{n-1}{2} \right\rfloor \ge \frac{3}{2}n-3. 
\]
Second, let $G$ consist of $\left\lfloor (n-2)/2 \right\rfloor$ internally disjoint paths of length $3$ sharing the same pair of endvertices, together with isolated vertices if necessary. 
Every cycle of $G$ has length $6$, and hence
\[ 
ex(n,\mathcal{C}_{3,4 \bmod 6}) \ge 3\left\lfloor\frac{n-2}{2}\right\rfloor \ge \frac{3}{2}n-\frac{9}{2}. 
\]
These lower bounds both have coefficient $3/2$, which is strictly larger than $11/8$. 

Thus, for all sufficiently large $n$, forbidding all
$(0,3,4 \bmod 6)$-cycles yields a strictly smaller extremal number than
forbidding any two of these three residue classes.
Therefore, Theorem~\ref{main} describes a genuinely new extremal phenomenon that cannot be obtained from the corresponding results for two residue classes.

The remainder of this paper is organized as follows. 
In Section~2, we introduce the notation and preliminary definitions. 
In Section~3, we establish a structural lemma for certain $8$-angulations, which will be used in the proof of the main theorem. 
In Section~4, we prove that every graph containing no $(0,3,4 \bmod 6)$-cycle is planar. 
In Section~5, we prove Theorem~\ref{main}, including the characterization of the equality case. 
Finally, in Section~6, we prove the exact formula for the extremal number by constructing an extremal graph for every $n\geq2$.

%preliminaries
\section{Preliminaries}
Unless otherwise stated, all graphs in this paper are simple. 
Let $G$ be a graph. 
For a path $P$ or a cycle $C$ in $G$, we denote by $l(P)$ and $l(C)$ the length of $P$ and $C$, respectively. 
If $l(C) = \ell$, then $C$ is called an \emph{$\ell$-cycle}. If $l(C)$ is even, then $C$ is called an \emph{even cycle}.  
If $P$ has endvertices $x$ and $y$, we write $\End(P) = \{x,y\}$. 
For $X, Y \subseteq V(G)$, if $P$ has endvertices $x$ and $y$ with $x \in X$ and $y \in Y$, 
and no internal vertex of $P$ belongs to $X \cup Y$, then $P$ is called an \emph{$(X,Y)$-path}. 

For a path $P$ and vertices $x,y \in V(P)$, let $P[x,y]$ denote the subpath of $P$ with endvertices $x$ and $y$. 
For an oriented cycle $C$ and vertices $x,y\in V(C)$, let $C[x,y]$ denote the $(x,y)$-path on $C$ directed from $x$ to $y$ along the orientation of $C$. 

For two subgraphs $H_1, H_2$ of a graph $G$, 
we define $H_1 \triangle H_2$ to be the spanning subgraph of $H_1 \cup H_2$ with edge set 
$(E(H_1) \cup E(H_2)) \setminus (E(H_1) \cap E(H_2))$. 
In the proof of the Main Theorem, we often use the fact that, 
for two cycles $C_1$ and $C_2$ of $G$, 
the edge set $E(C_1 \triangle C_2)$ can be decomposed into cycles. 

Throughout this paper, by abuse of notation, for a face $F$ of a plane graph, 
we also write $F$ for the graph obtained from the boundary of $F$. 
Let $G$ be a plane graph. 
For a face $F$ of $G$, if $F$ is a $d$-cycle, then $F$ is called a \emph{$d$-face}. 
The graph $G$ is called a \emph{$d$-angulation} if every face of $G$ is a $d$-face. 

%$\theta_4$-graph
\section{$\theta_4$-graph} 
A graph is called a \emph{$\theta_4$-graph} if it consists of several internally disjoint paths of length $4$ sharing the same pair of endvertices. 
See Figure~\ref{fig_theta_4} for an illustration. 

%%%%%%%%%%%%
\begin{figure}[htbp]
\centering
\includegraphics[width = 10cm]{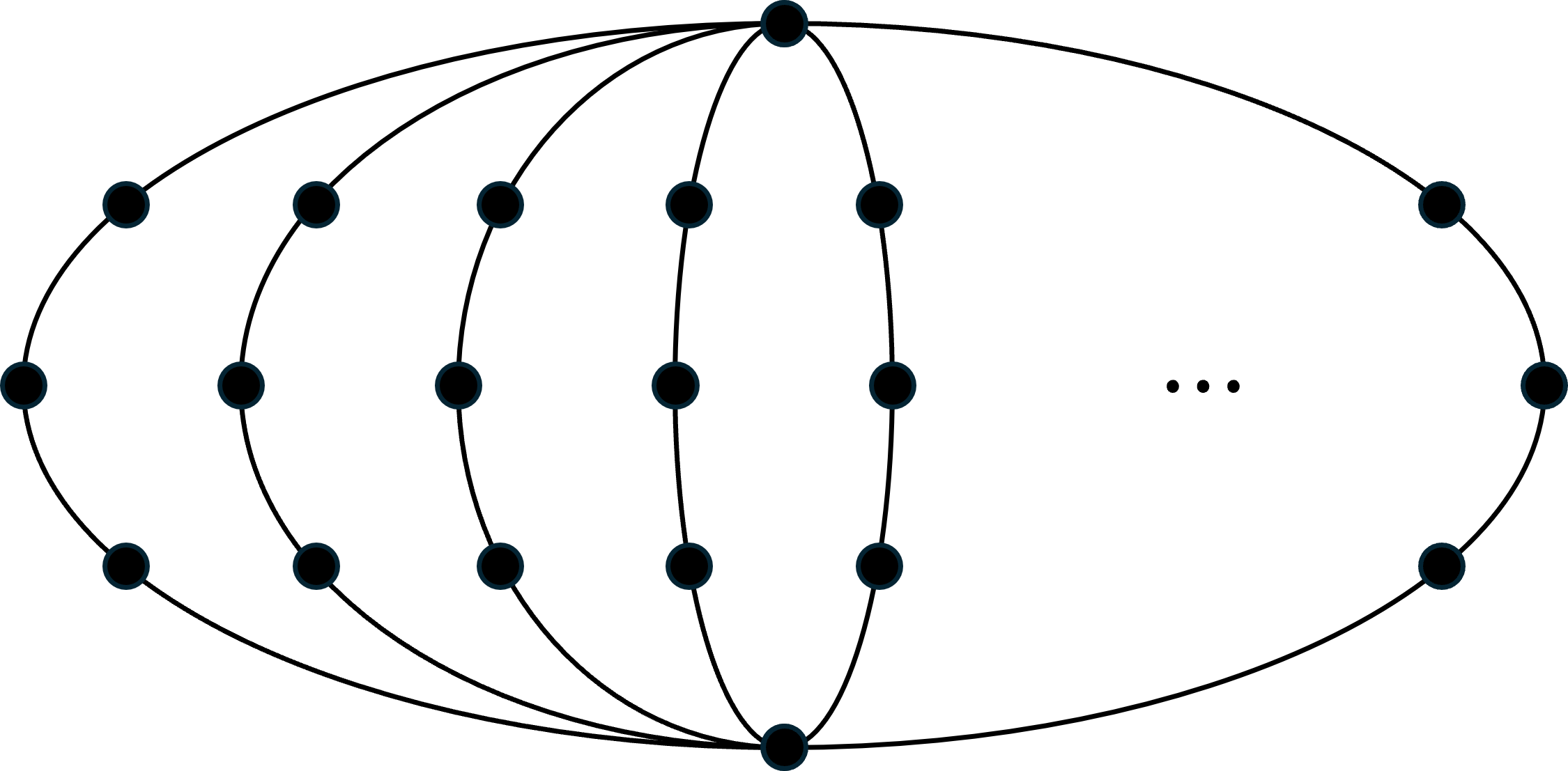}
\caption{$\theta_4$-graph.}
\label{fig_theta_4}
\end{figure}
%%%%%%%%%%%%

In this section, we prove the following lemma, which will be used in the latter part of the proof of the Main Theorem. 

\begin{lemma} \label{lem_theta_4}
Let $H$ be an $8$-angulation whose dual graph $H^*$ is bipartite. 
Suppose that the girth of $H$ is $8$, and that $H$ contains no cycles of length $10$ or $12$. 
Then $H$ is a $\theta_4$-graph.  
\end{lemma}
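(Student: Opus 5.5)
The plan is to reduce $H$ to its branch structure and show that the face boundaries force a $\theta_4$.

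\textbf{Step 1: parity.} Since $H^*$ is bipartite, the faces of $H$ can be properly $2$-coloured, say black and white, so that every edge lies on exactly one black and one white face. At a vertex $v$ the faces around $v$ alternate in colour, so $\deg(v)$ is even and $H$ is Eulerian. Since every face has even length $8$, every cycle of $H$ (a sum of face boundaries in the cycle space) is even, so $H$ is bipartite. Together with the hypotheses, every cycle of $H$ has length $8$, $14$, or at least $16$.

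If $H$ has no vertex of degree at least $3$, then $H$ is an $8$-cycle, which is a $\theta_4$-graph with two paths. Otherwise every branch vertex has degree at least $4$. I decompose $H$ into \emph{threads}: maximal paths whose internal vertices have degree $2$. Each thread lies on exactly one black face and one white face.

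\textbf{Step 2: thread lengths.} Let $F,F'$ be the two faces containing a thread $T$ of length $t$.
\begin{itemize}[leftmargin=*]
\item If $F\cap F'=T$, then $F\triangle F'$ is a cycle of length $16-2t$.
\item By girth $8$ this gives $t\le 4$, and the forbidden lengths $10,12$ exclude $t=2,3$. So $t\in\{1,4\}$.
\item If $F\cap F'$ has several components, I split $F\triangle F'$ into cycles. Each such cycle has length less than $16$, hence length $8$. A short count then shows that the pieces must again be length-$4$ segments.
\end{itemize}
The remaining task is to rule out threads of length $1$.

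\textbf{Step 3: the main obstacle, excluding $t=1$.} I would work at a branch vertex $v$ with faces $F_1,\dots,F_d$ in rotation, $d\ge 4$ even. Let $t_i$ be the length of the thread shared by $F_i$ and $F_{i+1}$ at $v$.

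First I take three consecutive faces. When the boundaries meet only in the obvious threads, $F_1\triangle F_2\triangle F_3$ is a cycle of length $24-2(t_1+t_2)$.
\begin{itemize}[leftmargin=*]
\item Girth forces $t_1+t_2\le 8$.
\item With $t_i\in\{1,4\}$ the possible lengths are $20$, $14$, and $8$. So the three-face sum alone does not settle the matter.
\end{itemize}

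To finish I would combine $F_i$ with faces across the far end of a short thread, or use longer chains of faces around $v$. The goal is to produce a cycle of length $10$ or $12$, or one shorter than $8$, whenever some $t_i=1$. Concretely, I would look at the other endpoint $w$ of a length-$1$ thread $vw$. The faces at $v$ and at $w$ give two chains of faces meeting along $vw$. Summing suitable sub-chains changes the length by the even amounts $2t_i-8\in\{-6,0\}$. Starting from $14$, I expect to hit $8$ and then be forced below $8$ or onto $10/12$ by appropriate choices. Degenerate overlaps, where faces share more than the intended threads, will need separate case analysis. I expect this step to be the hardest.

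\textbf{Step 4: conclusion.} Once every thread has length $4$ and every pair of adjacent faces shares exactly one thread, consider the two-colourable face structure. Any branch vertex $v$ of degree $d$ is surrounded by faces $F_1,\dots,F_d$, each a cycle made of two threads of length $4$. Each such face goes from $v$ to a single opposite branch vertex $u$. Consecutive faces share a thread at $v$ and hence at $u$, so all faces around $v$ have the same opposite vertex $u$. Connectivity, which follows from $H$ being an $8$-angulation, then shows that the only branch vertices are $u$ and $v$. Hence $H$ is $d$ internally disjoint $u$--$v$ paths of length $4$, that is, a $\theta_4$-graph. As a consistency check, Euler's formula gives $e=\tfrac43(n-2)$, which matches $n=3m+2$ and $e=4m$.
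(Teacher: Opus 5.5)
\textbf{The gap is Step 3, and a local argument cannot fill it.} Your Steps 1, 2 and 4 are fine, but excluding threads of length $1$ is the whole content of the lemma, and there you have a hope rather than an argument. The local strategy you sketch cannot succeed.

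Consider the regular hyperbolic tiling of the plane by octagons, four at each vertex.
\begin{itemize}
\item It is an infinite $8$-angulation whose dual (squares, eight at each vertex) is bipartite.
\item Every vertex has degree $4$, so every thread has length $1$.
\item An Euler count on the disk bounded by a cycle of length $L$ enclosing $f$ faces gives $L\ge 4f+4$. So its cycles have length $8$ (one face), $14$ (two adjacent faces), or at least $16$.
\end{itemize}
Thus every bounded neighbourhood of a length-$1$ thread in this tiling satisfies all your constraints. No symmetric difference of faces near $v$ and $w$ can produce a cycle shorter than $8$ or of length $10$ or $12$.

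Your bookkeeping also has the wrong sign. Attaching a face along one thread of length $t$ changes the boundary length by $8-2t\in\{6,0\}$, so you never descend from $14$ to $8$ this way. Any contradiction must therefore use that $H$ is finite and drawn on the sphere, i.e.\ Euler's formula.

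\textbf{The missing idea is a global count in the dual, which is the paper's argument.} First dismiss the multi-component case of Step 2 directly: two or more cycles of length at least $8$ would need at least $16>14$ edges. Step 2 then shows that any two faces are joined in $H^*$ by $0$, $1$ or $4$ parallel edges. Delete every bundle of four. The resulting graph $H'$ is a simple bipartite plane graph, and since each face has $8$ edges, every vertex of $H'$ has degree $8-4j\in\{0,4,8\}$. A component $D$ of $H'$ with an edge would then have minimum degree at least $4$. This gives $e(D)\ge 2|V(D)|$ with $|V(D)|\ge 5$, contradicting $e(D)\le 2|V(D)|-4$ for simple plane bipartite graphs. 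Hence there are no threads of length $1$, every face consists of two threads of length $4$, and your Step 4 completes the proof.
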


\begin{proof}
Since every face of $H$ has even length, $H$ is bipartite. 
Let $f_H$ denote the number of faces of $H$.

If $f_H \le 2$, then $H$ is an $8$-cycle, and hence $H$ is a $\theta_4$-graph. 
Thus we may assume that $f_H \ge 3$. 
Take two distinct $8$-faces $C_1$ and $C_2$ of $H$ such that $e(C_1 \cap C_2) \ge 1$. 
Since $f_H \ge 3$, the set $E(C_1 \triangle C_2)$ is not empty. 
Since $H$ is bipartite, the edge set $E(C_1 \triangle C_2)$ can be decomposed into even cycles. 
Moreover, since the girth of $H$ is at least $8$ and $e(C_1 \triangle C_2) \le 14$, 
the graph $C_1 \triangle C_2$ is a cycle. 
Since $H$ contains no cycle of length $10$ or $12$, we have $e(C_1 \triangle C_2) \in \{ 8, 14\}$. 
It follows that $C_1 \cap C_2$ is a path, and
\begin{equation} \label{eq_theta_4_1}
e(C_1 \cap C_2) \in \{ 1, 4\}. 
\end{equation}

By \eqref{eq_theta_4_1}, any two vertices of $H^*$ are joined by $0$, $1$, or $4$ edges. 
Let $H'$ be the graph obtained from $H^*$ by deleting all sets of four parallel edges. 
Since $H^*$ is bipartite, $H'$ is a simple bipartite graph.  
Moreover, the degree of every vertex of $H'$ is divisible by $4$. 

If $H'$ has no edge, then every edge of $H^*$ belongs to a set of four parallel edges, 
and hence $H$ is a $\theta_4$-graph. 
Thus, we may assume that $H'$ has at least one edge. 
Let $D$ be a component of $H'$ having at least one edge. 
Since every vertex of $D$ has degree at least $4$, we have $e(D) \ge 2|V(D)|$ and $|V(D)| \ge 5$. 
On the other hand, since $D$ is a plane bipartite graph with $|V(D)| \ge 5$, we have $e(D) \le 2|V(D)|-4$, a contradiction. 

Therefore $H'$ has no edge, and hence $H$ is a $\theta_4$-graph. 
\end{proof}

%Planarity
\section{Planarity}
In this section, we prove the following proposition. 
\begin{proposition} \label{planarity}
Every graph containing no $(0,3,4 \bmod 6)$-cycle is planar. 
\end{proposition}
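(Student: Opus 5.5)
By Kuratowski's theorem (in its subdivision form), it suffices to show that no graph containing no $(0,3,4 \bmod 6)$-cycle contains a subdivision of $K_5$ or of $K_{3,3}$. Both are closed under subdivision, so I will assign to each edge of the base graph a subdividing path length and derive a contradiction. The allowed cycle lengths are those congruent to $1,2,5 \pmod 6$. The algebraic approach will work modulo $2$ and modulo $3$ separately. Write $\ell_e$ for the length of the path replacing edge $e$. The condition then becomes: for every cycle $Z$ of the base graph, $\sum_{e\in Z}\ell_e \not\equiv 0 \pmod 3$, and whenever $\sum_{e\in Z}\ell_e$ is even, it is $\equiv 2 \pmod 6$. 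Equivalently, even cycles must have length $\equiv 2 \pmod 3$.

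\textbf{Case $K_{3,3}$.} Label the parts $\{a_1,a_2,a_3\}$ and $\{b_1,b_2,b_3\}$, and write $\ell_{ij}$ for the path length between $a_i$ and $b_j$.

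\emph{Mod-$2$ step.} Each $4$-cycle of $K_{3,3}$ has the form $a_ib_ja_{i'}b_{j'}$. Consider the parity function $p_{ij}=\ell_{ij} \bmod 2$ and the $2\times2$ "minors" $p_{ij}+p_{ij'}+p_{i'j}+p_{i'j'}$. Using the six-cycles (Hamiltonian cycles) and the $4$-cycles, I will show that either many cycles are even, or the structure is forced. Even cycles are strongly restricted: each must be $\equiv 2 \pmod 3$.

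\emph{Mod-$3$ step.} The sum of the $4$-cycle $a_1b_1a_2b_2$ and the $4$-cycle $a_1b_2a_2b_1$... In fact, the three $4$-cycles on a fixed pair $a_i,a_{i'}$ pairwise combine. The key identity is that, in $\mathbb{Z}_3$, the cycle space of $K_{3,3}$ (dimension $4$) has linear dependencies among its cycle sums. Concretely, fixing $a_1,a_2$ and the columns $b_1,b_2,b_3$, set $d_j=\ell_{1j}-\ell_{2j}$. The $4$-cycles give the values $d_j-d_{j'}$. The three differences $d_1-d_2$, $d_2-d_3$, $d_3-d_1$ sum to $0$, and all must be nonzero mod $3$. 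This forces $\{d_1,d_2,d_3\}$ to be all distinct mod $3$, i.e.\ the differences are all $\pm1$ with the same sign pattern. Combining this with the parity constraint (even cycles $\equiv 2 \bmod 3$, i.e.\ $\equiv 2 \bmod 6$) across all $9$ four-cycles and the Hamiltonian $6$-cycles should give a contradiction by exhaustive but small case analysis on residues mod $6$.

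I expect this parity-and-residue bookkeeping to be the main obstacle. I would first try to reduce it with symmetry, e.g.\ by normalizing $\ell_{1j}$ and $\ell_{i1}$ via the "potential" shifts, since adding $c$ to every path at a vertex changes every cycle through it by $2c$. This reduces the problem to a $2\times2$ matrix of residues mod $6$, which can be checked directly.

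\textbf{Case $K_5$.} Here I would avoid a separate computation where possible. Any $K_5$-subdivision whose triangles all avoid lengths $\equiv 0 \pmod 3$ still contains $K_4$-subdivisions and, more usefully, many $\theta$-structures. For a theta graph with paths $P_1,P_2,P_3$ of lengths $x,y,z$, the three cycle lengths are $x+y$, $y+z$, $z+x$. Either one of these is even and must be $\equiv 2 \bmod 6$, or constraints propagate. I would run the same potential-normalization argument on $K_5$: normalize the lengths on a spanning star at one vertex, leaving six residues mod $6$ on $K_4$, with constraints coming from all triangles, $4$-cycles and $5$-cycles. Then I would show that no assignment survives, checking triangles mod $3$ first (which already forces a near-"$3$-coloring" structure of differences), then adding parity.

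If the direct computation is too long, an alternative is to show that a $K_5$-subdivision without bad cycles always contains a $K_{3,3}$-subdivision without bad cycles, but this is not generally true, so the computational route is preferred.
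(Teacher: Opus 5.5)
\textbf{There is a genuine gap: the proposal never proves either of the two lemmas, and its proposed shortcuts are incorrect.}

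Your reduction via Kuratowski's theorem is the same as the paper's, but it is the easy part. The whole content of the proposition lies in showing that every subdivision of $K_5$ and of $K_{3,3}$ contains a forbidden cycle, and the proposal proves neither. Both cases end in promises (``should give a contradiction by exhaustive but small case analysis'', ``I would show that no assignment survives''). The two devices you intend to use to make that analysis small are both wrong.

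First, the $4$-cycle $a_1b_ja_2b_{j'}$ has length $s_j+s_{j'}$ with $s_j=\ell_{1j}+\ell_{2j}$, not $d_j-d_{j'}$ with $d_j=\ell_{1j}-\ell_{2j}$. The two agree modulo $2$ but not modulo $3$. The correct constraint is $s_j+s_{j'}\not\equiv 0\pmod 3$. It is satisfied, for instance, by $s_1\equiv s_2\equiv s_3\equiv 1$, and it is violated whenever $s_1,s_2,s_3$ are pairwise distinct modulo $3$. That is the opposite of the conclusion you draw.

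Second, the potential shift is not a symmetry of the problem. Adding $c$ to every path at $v$ changes the cycles through $v$ by $2c$ but leaves all other cycles unchanged. It therefore preserves the hypothesis only when $2c\equiv 0\pmod 6$, i.e.\ $c\equiv 0\pmod 3$. Such shifts merely flip parities: they let you normalize parities along a spanning tree, but not any residue modulo $3$. In fact, for $K_{3,3}$ the residues modulo $3$ of the $15$ cycle lengths determine all nine $\ell_{ij}$ modulo $3$. So neither the reduction to a $2\times 2$ matrix of residues modulo $6$ for $K_{3,3}$ nor the reduction to six residues on $K_4$ for $K_5$ is available.

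What is missing is an actual argument for each lemma.

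For $K_5$, the paper works purely modulo $3$. If no path length is $\equiv 0$, color the edges of $K_5$ by residue $1$ or $2$. Avoiding monochromatic triangles forces (since $R(3,3)=6$) the pentagon/pentagram coloring, and then $v_1v_2v_5v_4$ is a $4$-cycle of length $\equiv 1+2+1+2\equiv 0$. If instead, say, $a_{4,5}\equiv 0$, the triangles $v_iv_4v_5$ and the $4$-cycles $v_iv_4v_jv_5$ force the sums $a_{i,4}+a_{i,5}$ ($i=1,2,3$) to be all $\equiv 1$ or all $\equiv 2$. A short case split then finishes.

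For $K_{3,3}$, the paper records the nine $4$-cycle lengths modulo $6$ in a $3\times 3$ matrix $B$ with even row and column sums. It expresses each Hamiltonian $6$-cycle length as $-b_{\mathrm{sum}}-T_\sigma$ modulo $6$, and rules out the three possible parity patterns of $B$ up to symmetry.

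An argument of this kind, or a genuinely exhaustive check of all residue vectors, is what the proof consists of. As written, your proposal is a plan whose key steps are either absent or rest on false identities.
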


To prove the above proposition, we present two lemmas.
\begin{lemma} \label{K_5}
Every subdivision of $K_5$ contains a $(0 \bmod 3)$-cycle. 
In particular, a graph containing no $(0,3,4 \bmod 6)$-cycle cannot contain a subdivision of $K_5$. 
\end{lemma}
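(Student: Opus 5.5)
The second sentence follows from the first, because a $(0 \bmod 3)$-cycle is a $(0 \bmod 6)$- or $(3 \bmod 6)$-cycle. So the whole task is the first sentence. I would reduce it to a finite problem over $\mathbb{Z}_3$. A subdivision of $K_5$ is given by assigning to each edge $e$ of $K_5$ a positive integer length $w(e)$. Its cycles correspond exactly to the cycles of $K_5$ (triangles, $4$-cycles and $5$-cycles), and a cycle has length $w(C)=\sum_{e\in C}w(e)$. It therefore suffices to show that for every $w\colon E(K_5)\to\mathbb{Z}_3$ some cycle $C$ of $K_5$ has $w(C)=0$. The positivity of the lengths plays no role. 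Suppose, for a contradiction, that $w(C)\neq 0$ for every cycle $C$.

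To cut down the $3^{10}$ cases I would normalise at one vertex. Fix $v\in V(K_5)$, let $K$ be the $K_4$ on the other four vertices, and put $a_u=w(vu)$. Define
\[
w'(uu') = w(uu') + a_u + a_{u'} \qquad \text{for } uu'\in E(K).
\]
A direct computation in $\mathbb{Z}_3$ (using $-2=1$) translates the hypothesis as follows.
\begin{itemize}
\item Every path $P$ of $K$ with at least one edge satisfies $w'(P)+\sum_{x\in \mathrm{int}(P)}a_x\neq 0$. These conditions come from the cycles through $v$; for $P$ a single edge the condition says $w'(e)\neq 0$, which is the triangle condition.
\item Every cycle $C$ of $K$ satisfies $w'(C)+\sum_{x\in V(C)}a_x\neq 0$.
\end{itemize}
So every edge of $K$ has $w'\in\{1,2\}$, and the problem becomes a constrained colouring of the six edges of $K_4$ together with four vertex labels $a_x\in\mathbb{Z}_3$.

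Next I would run through the length-$2$ path conditions. For each path $u\,x\,y$ we need $w'(ux)+w'(xy)+a_x\neq 0$. Hence if $ux$ and $xy$ have equal $w'$-value $c$, then $a_x\neq -2c=c$. If they have different values, then $a_x\neq 0$. Vertices of $K$ have degree $3$, so each vertex sees either three equal $w'$-values or a $2$--$1$ split. I would case on the structure of the $w'$-colour classes in $K_4$ up to symmetry: a monochromatic $K_4$, a triangle plus star, a perfect matching in one colour, a path, and so on. In each case the length-$2$ conditions pin down or restrict each $a_x$. Then one checks the length-$3$ path conditions and the triangle/$4$-cycle conditions in $K$, which correspond to $4$- and $5$-cycles of $K_5$, to reach a contradiction. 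Swapping $1\leftrightarrow 2$ together with $a\mapsto -a$ is a symmetry of all the constraints, so we may also assume that colour $1$ is the majority colour.

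The main obstacle I expect is keeping this case analysis short. A cleaner route I would try first is to choose $v$ cleverly, for instance a vertex where many $a_u$ coincide; one always exists among four values in $\mathbb{Z}_3$. Another is to sum the conditions over suitable families, such as the three $4$-cycles of $K$ or the Hamiltonian cycles, and exploit the fact that a sum of nonzero elements of $\mathbb{Z}_3$ with prescribed signs is constrained. If neither of these gives a slick argument, the finite check is still small enough to complete by hand with the symmetries above.
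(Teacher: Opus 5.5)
Your reformulation is correct. I checked three things: the cycle $v\,u_0u_1\cdots u_m\,v$ has weight $w'(P)+\sum_{x\in\mathrm{int}(P)}a_x$; a cycle $C$ of $K$ has weight $w'(C)+\sum_{x\in V(C)}a_x$; and the length-$2$ conditions give exactly your vertex rules. Namely, a vertex of $K$ whose three edges all have $w'$-value $c$ has $a_x\neq c$, and a vertex with a $2$--$1$ split has $a_x$ equal to its minority value.

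The gap is that the proposal ends where the proof has to start. The lemma is a finite statement about $\mathbb{Z}_3$-weightings of $K_5$, so the case analysis is its entire content. You only assert that in each case the length-$3$ path and cycle conditions give a contradiction, and the two \emph{cleaner routes} are left as possibilities. As written, nothing excludes a weighting all of whose cycle sums are nonzero.

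Your plan does go through, and briefly. Call the vertices of $K$ $p,q,r,s$ and let colour $1$ be the majority. Then the colour-$1$ graph is one of the following: $K_4$; $K_4-pq$; the $4$-cycle $pqrs$; the paw (triangle $pqr$ plus $rs$); or, for a $3$--$3$ split after your swap symmetry, the triangle $pqr$ or the path $pqrs$. Each case closes quickly.
\begin{itemize}
\item $K_4$: all $a_x\in\{0,2\}$. The length-$3$ paths give $a_x+a_y\neq 0$ for all $x\neq y$, so at most one $a_x$ is $0$. A triangle avoiding it has weight $3+6\equiv 0$.
\item $K_4-pq$: $a_p=a_q=2$. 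The paths $q\,p\,r\,s$ and $q\,p\,s\,r$ give $a_r\neq 0\neq a_s$, hence $a_r=a_s=2$. The triangle $prs$ then has weight $3+6\equiv 0$.
\item The $4$-cycle: every $a_x=2$, and $pqrs$ has weight $4+8\equiv 0$.
\item The paw: $a_p=a_q=2$, $a_s=1$ and $a_r\neq 1$. The path $s\,p\,r\,q$ forces $a_r\neq 0$, and the $4$-cycle $pqrs$ forces $a_r\neq 2$.
\item The triangle: $a_p=a_q=a_r=2$, and $pqr$ has weight $3+6\equiv 0$.
\item The path: $(a_p,a_q,a_r,a_s)=(1,2,2,1)$, and the triangle $pqr$ has weight $4+5\equiv 0$.
\end{itemize}

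Written out, this is a complete proof by a different route from the paper's. The paper does not normalise at a vertex; it splits on whether some $a_{i,j}\equiv 0$. If none is, $R(3,3)=6$ forces the pentagon/pentagram colouring, and the alternating $4$-cycle $v_1v_2v_5v_4$ has length $\equiv 0$. If $a_{4,5}\equiv 0$, the triangles and $4$-cycles through $v_4v_5$ make $a_{i,4}+a_{i,5}$ a common nonzero constant, and two short subcases finish. The two arguments are of comparable length; yours trades the Ramsey step for a six-case check on $K_4$.
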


\begin{proof}
Throughout this proof, all congruences are taken modulo $3$. 
Let $G$ be a subdivision of $K_5$. Suppose that $G$ contains no $(0 \bmod 3)$-cycles. 
Let $v_1, \ldots, v_5$ be the vertices of $K_5$, and for each edge $v_i v_j$ let $P_{i,j}$ denote the corresponding path in $G$ and set $a_{i,j} = l(P_{i,j})$. 

Suppose that there is no pair $i,j$ with $a_{i,j} \equiv 0$. 
If $a_{i,j} \equiv 1$, we color the edge $v_i v_j$ red, and if $a_{i,j} \equiv 2$, we color the edge $v_i v_j$ blue. 
In this coloring, if there exists a monochromatic triangle, then $G$ contains a $(0 \bmod 3)$-cycle, which is a contradiction.
By Ramsey theory and the symmetry among the vertices $v_1, \ldots, v_5$, we may assume that 
\[
a_{i, i+1} \equiv 1, \quad a_{i, i+2} \equiv 2 \quad (i=1, \ldots, 5)
\]
where the indices are taken modulo $5$.
Therefore,
\[
l(P_{1,2} \cup P_{2,5} \cup P_{5,4} \cup P_{4,1})
= a_{1,2} + a_{2,5} + a_{5,4} + a_{4,1} \equiv 1+2+1+2 \equiv 0,
\]
and hence $G$ contains a $(0 \bmod 3)$-cycle, which is a contradiction. 

Hence, there exists a pair $i,j$ such that $a_{i,j} \equiv 0$. 
Without loss of generality, we may assume that $a_{4,5} \equiv 0$. 
By considering the $3$-cycles $v_i v_4 v_5$ for $i=1,2,3$, we obtain that $a_{i,4} + a_{i,5} \equiv 1 \text{ or } 2$. 
Moreover, by considering the $4$-cycles $v_1 v_4 v_2 v_5$, $v_2 v_4 v_3 v_5$, and $v_3 v_4 v_1 v_5$, it follows that
\[
(a_{1,4}+a_{1,5}, \; a_{2,4}+a_{2,5}, \; a_{3,4}+a_{3,5}) \equiv (1,1,1), (2,2,2).
\]
We may assume that
\begin{equation} \label{k5}
(a_{1,4}+a_{1,5}, \; a_{2,4}+a_{2,5}, \; a_{3,4}+a_{3,5}) \equiv (1,1,1)
\end{equation}
since, if necessary, we can multiply all $a_{i,j}$ by $2$ and reduce to the case congruent to $(1,1,1)$. 

First, consider the situation where $a_{i,j} \equiv 0$ for some $1 \leq i \leq 3$ and $4 \leq j \leq 5$. 
Without loss of generality, we may assume that $a_{1,5} \equiv 0$. 
From $a_{1,4}+a_{1,5} \equiv 1$, it follows that $a_{1,4} \equiv 1$. 
Since $a_{1,5} \equiv 0$, by the same reasoning as before we obtain
\[
(a_{2,1}+a_{2,5}, \; a_{3,1}+a_{3,5}, \; a_{4,1}+a_{4,5}) \equiv (1,1,1), (2,2,2).
\]
Since $a_{4,1} = a_{1,4} \equiv 1$ and $a_{4,5} \equiv 0$, it follows that
\[
(a_{2,1}+a_{2,5}, \; a_{3,1}+a_{3,5}) \equiv (1,1).
\]
Therefore, by considering the cycle $v_1 v_3 v_5 v_2 v_4$, it follows that 
\begin{equation*}
\begin{split}
l(P_{1,3} \cup P_{3,5} \cup P_{5,2} \cup P_{2,4} \cup P_{4,1})
&= (a_{3,1}+a_{3,5}) + (a_{2,4}+a_{2,5}) + a_{1,4} \\
&\equiv 1+1+1 \equiv 0
\end{split}
\end{equation*}
and hence $G$ contains a $(0 \bmod 3)$-cycle, which is a contradiction.

Finally, we consider the case where there is no pair $(i,j)$ with $1 \leq i \leq 3$ and $4 \leq j \leq 5$ such that $a_{i,j} \equiv 0$. 
By equation \eqref{k5}, it follows that 
\[
a_{i,j} \equiv 2 \quad (1 \leq i \leq 3,\; 4 \leq j \leq 5).
\]
By considering the two cycles $v_1 v_2 v_5$ and $v_1 v_2 v_4 v_3 v_5$, we obtain $a_{1,2} \equiv 0$.
By the symmetry of the vertices $v_1, v_2, v_3$, we have $a_{2,3}, \; a_{3,1} \equiv 0$.
Hence, by considering the cycle $v_1 v_2 v_3$, it follows that 
\[
l(P_{1,2} \cup P_{2,3} \cup P_{3,1}) \equiv a_{1,2} + a_{2,3} + a_{3,1} \equiv 0,
\]
and hence $G$ contains a $(0 \bmod 3)$-cycle, which is a contradiction.
\end{proof}

\begin{lemma} \label{K_{3,3}}
Every subdivision of $K_{3,3}$ contains a $(0, 4 \bmod 6)$-cycle. 
In particular, a graph containing no $(0,3,4 \bmod 6)$-cycle cannot contain a subdivision of $K_{3,3}$. 
\end{lemma}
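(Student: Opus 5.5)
The plan is to combine a parity argument with an averaging argument modulo $3$ inside the cycle space of $K_{3,3}$.

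\textbf{Setup.} Let $G$ be a subdivision of $K_{3,3}$ with branch paths $P_e$ of length $a_e$, one for each edge $e$ of $K_{3,3}$. Suppose $G$ has no $(0,4 \bmod 6)$-cycle. A cycle length $\ell$ is forbidden exactly when $\ell$ is even and $\ell\equiv 0$ or $1 \pmod 3$. So every \emph{even} cycle of $G$ has length $\equiv 2 \pmod 3$.

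\textbf{Structure of the cycle space.} Cycles of $G$ correspond to cycles of $K_{3,3}$. The cycle space $\mathcal{Z}$ of $K_{3,3}$ over $\mathbb{F}_2$ has dimension $9-6+1=4$. It has $15$ nonzero elements, and $K_{3,3}$ has exactly $9$ four-cycles and $6$ six-cycles. Hence \emph{every} nonzero element of $\mathcal{Z}$ is a single cycle. The parity $\pi(C)=\sum_{e\in C}a_e \bmod 2$ is an $\mathbb{F}_2$-linear functional on $\mathcal{Z}$. Its kernel $W$ therefore has dimension at least $3$, and every nonzero element of $W$ is an even cycle of $G$.

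\textbf{Averaging identity.} For a subspace $U\le W$, let $S(U)$ be the set of edges lying in some element of $U$, and put $s(U)=\sum_{e\in S(U)}a_e$. If $\dim U=d$, then each edge $e\in S(U)$ lies in exactly $2^{d-1}$ of the elements of $U$, since its indicator is a nonzero functional on $U$. Summing lengths over the $2^d-1$ nonzero elements gives
\[
2^{d-1}s(U)=\sum_{0\ne C\in U} l(C)\equiv 2(2^d-1)\pmod 3 .
\]
\begin{itemize}
\item For $d=2$ this gives $2s(U)\equiv 6\equiv 0$, so $s(U)\equiv 0$.
\item For $d=3$ this gives $4s(U)\equiv 14$, so $s(U)\equiv 2$.
\end{itemize}
Now fix a $3$-dimensional $V\le W$. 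If we can find a $2$-dimensional $U\le V$ with $S(U)=S(V)$, then $s(U)=s(V)$ while $0\not\equiv 2$, which is the desired contradiction. Here $U=\{0,C_1,C_2,C_1\triangle C_2\}$, so $S(U)=E(C_1)\cup E(C_2)$. The task is thus to find two cycles of $V$ whose union covers $S(V)$.

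\textbf{Main obstacle: finding $C_1,C_2$.} I expect this to be the hardest step; it is a finite check in $K_{3,3}$, split by what $V$ is.
\begin{itemize}
\item \emph{Some edge $e$ of $K_{3,3}$ misses every element of $V$.} Then $V$ is the kernel of the indicator functional of $e$, i.e.\ $V$ is the cycle space of $K_{3,3}-e$ and $S(V)$ consists of the other $8$ edges. There I take a $6$-cycle and a $4$-cycle of $K_{3,3}-e$ sharing exactly two edges. Their union has $8$ edges, and their symmetric difference is a $6$-cycle, so both lie in $V$.
\item \emph{$S(V)$ is all $9$ edges.} Here $V$ is $\ker\phi$ for some nonzero functional $\phi$ on $\mathcal{Z}$. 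I would look for two $6$-cycles in $V$ meeting in a perfect matching. Two such cycles cover all $9$ edges, and their symmetric difference is again a $6$-cycle. To produce them, I would use the decomposition of $E(K_{3,3})$ into three perfect matchings $M_1,M_2,M_3$, whose pairwise unions are $6$-cycles. Since $\phi$ is linear and $(M_1\cup M_2)+(M_1\cup M_3)=M_2\cup M_3$, at least one of these three $6$-cycles lies in $V$. One then checks, using the symmetry of $K_{3,3}$ and the several choices of matching decompositions, that a second suitable one exists.
\end{itemize}
If some subcase of the $9$-edge situation resists this, the fallback is a direct enumeration of the $\phi$ up to automorphisms of $K_{3,3}$ (there are only $15$ of them).

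Once the contradiction is obtained, the second sentence of the lemma follows immediately, since any $(0,4\bmod 6)$-cycle is in particular a $(0,3,4\bmod 6)$-cycle.
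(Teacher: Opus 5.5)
Your proof is correct, and it takes a genuinely different route from the paper's.

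\textbf{How the paper argues.} The paper works modulo $6$ throughout. It puts the nine $4$-cycle lengths into a matrix $B$ and rewrites every $6$-cycle length as $-b_{\mathrm{sum}}-T_\sigma$. It then splits into cases by the parity pattern of $B$: all entries even, a $2\times 2$ odd block, or odd entries off the diagonal. Each case is closed by an explicit computation with a well-chosen $\sigma$.

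\textbf{How you argue.} You separate out parity as the linear functional $\pi$. This turns the problem into a pure mod-$3$ statement on the even-cycle subspace $W$. Your incidence count $2^{d-1}s(U)\equiv 2(2^d-1)\pmod 3$ then converts the contradiction into a length-free fact about the cycle space of $K_{3,3}$.

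\textbf{How the cases match.} The paper's three cases reappear in your split. Its Case~1 is $\dim W=4$, where you may choose $V$ freely. Its Case~2 is $\pi=\chi_e$, which is your edge-missing case. Its Case~3 is your $9$-edge case.

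\textbf{What each buys.} Your version gives one uniform reason for the contradiction and isolates the only graph-specific input, a finite fact about $K_{3,3}$. The paper's version is fully explicit and needs nothing beyond the list of $4$- and $6$-cycles.

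\textbf{Closing your deferred check.} The step you leave as ``one then checks'' is true, and it closes without enumeration.
\begin{itemize}
\item $K_{3,3}$ has exactly two $1$-factorizations, $\{M_1,M_2,M_3\}$ and $\{M_1',M_2',M_3'\}$, coming from the even and the odd permutations.
\item Each edge lies in exactly one $M_i$ and exactly one $M_j'$, and $|M_i\cap M_j'|=1$.
\item Put $D_M=E\setminus M$. Since $D_{M_1}+D_{M_2}+D_{M_3}=0$ in $\mathcal{Z}$, the functional $\phi$ vanishes on either all three of these $6$-cycles or exactly one of them. The same holds for the primed triple.
\item Suppose $\phi$ vanishes on exactly one in each triple, say $D_{M_i}$ and $D_{M_j'}$. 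Let $e$ be the common edge of $M_i$ and $M_j'$. Then $\phi$ and $\chi_e$ agree on all six $6$-cycles.
\item These six $6$-cycles span $\mathcal{Z}$, because every $4$-cycle equals $M\triangle M'=D_M+D_{M'}$ for suitable $M$, $M'$ of opposite parity.
\item Hence $\phi=\chi_e$ and $V=\ker\chi_e$, which contradicts $S(V)=E$.
\end{itemize}
So $V$ contains all three $6$-cycles of one $1$-factorization. Any two of them meet in the third matching and together cover all nine edges, exactly as you wanted.
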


\begin{proof}
Throughout this proof, all congruences are taken modulo $6$. 
Let $G$ be a subdivision of $K_{3,3}$. Suppose that $G$ contains no $(0,4 \bmod 6)$-cycle. 
Let $\{x_1, x_2, x_3\}$ and $\{y_1, y_2, y_3\}$ be the bipartite sets of $K_{3,3}$. 
For each edge $x_i y_j$, let $P_{i,j}$ denote the corresponding path in $G$, and set $l(P_{i,j}) = a_{i,j}$.
For $1 \leq i,j \leq 3$, let 
$C_{i,j} = K_{3,3} - \{x_i, y_j\}$.
These are $4$-cycles of $K_{3,3}$. 
Furthermore, let $S_3$ denote the symmetric group on $\{1,2,3\}$, 
namely the set of all bijections from $\{1,2,3\}$ to itself.  
For $\sigma \in S_3$, define
$C_{\sigma} = K_{3,3} - \{x_1 y_{\sigma(1)}, \; x_2 y_{\sigma(2)}, \; x_3 y_{\sigma(3)}\}$.
These are $6$-cycles of $K_{3,3}$.
For a cycle $C$ of $K_{3,3}$, let $l'(C)$ denote the length of the corresponding cycle in $G$. 
Then the following holds, where $k, \ell$ range over integers from $1$ to $3$:
\begin{itemize}
  \item For $1 \leq i,j \leq 3$,
  \[
  l'(C_{i,j}) = \sum_{\substack{k \neq i \\ \ell \neq j}} a_{k,\ell} \equiv 1,2,3,5 \pmod{6}.
  \]
  \item For $\sigma \in S_3$,
  \[
  l'(C_{\sigma}) = \sum_{\ell \neq \sigma(k)} a_{k,\ell}  \equiv 1,2,3,5 \pmod{6}.
  \]
\end{itemize}
For $1 \leq i,j \leq 3$, let $b_{i,j} = l'(C_{i,j})$. Then, $b_{i,j} \equiv 1,2,3,5$. 
Define the $3 \times 3$ matrix $B$ by $B = (b_{i,j})_{1 \leq i,j \leq 3}$. 
By the definitions of $C_{i,j}$ and $b_{i,j}$, a straightforward calculation shows that the sum of the entries in each row and each column of $B$ is even.  

For $\sigma \in S_3$, define
$T_{\sigma} = \sum_{\ell = \sigma(k)} b_{k,\ell}$.
Then
\[
T_{\sigma} = \sum_{k,\ell} a_{k,\ell} + \sum_{\ell = \sigma(k)} a_{k,\ell}.
\]
On the other hand, if we set
$b_{\mathrm{sum}} = \sum_{k,\ell} b_{k,\ell}$,
then
\[
b_{\mathrm{sum}} = 4 \sum_{k,\ell} a_{k,\ell}.
\]
Moreover, we have
\begin{align*}
l'(C_{\sigma}) 
&= \sum_{k,\ell} a_{k,\ell} - \sum_{\ell = \sigma(k)} a_{k,\ell} \\
&= 2 \sum_{k,\ell} a_{k,\ell} - \Bigl( \sum_{k,\ell} a_{k,\ell} + \sum_{\ell = \sigma(k)} a_{k,\ell} \Bigr) \\
&\equiv -4 \sum_{k,\ell} a_{k,\ell} - T_{\sigma} \\
&= -b_{\mathrm{sum}} - T_{\sigma}.
\end{align*}
Summarizing the above, we obtain the following.
\begin{itemize}
  \item For $1 \leq i,j \leq 3$,
  \begin{equation*}
  b_{i,j} \equiv 1,2,3,5 \pmod{6}.
  \end{equation*}
  \item The sum of the entries in each row and each column of $B$ is even. 
  \item For $\sigma \in S_3$,
  \begin{equation} \label{sigma}
  b_{\mathrm{sum}} + T_{\sigma}  \equiv 1,3,4,5 \pmod{6}.
  \end{equation}
\end{itemize}
By considering the parity of the entries of $B$ together with the  symmetry of $K_{3,3}$, 
it suffices to examine only the following cases, where each asterisk $*$ represents one of the values $1$, $3$, or $5$. 
%matrix
\[
\textbf{Case $1$}: \quad
B = \begin{pmatrix}
2 & 2 & 2 \\
2 & 2 & 2 \\
2 & 2 & 2
\end{pmatrix}, \quad
\textbf{Case $2$}: \quad
B = \begin{pmatrix}
2 & 2 & 2 \\
2 & * & * \\
2 & * & *
\end{pmatrix}, \quad
\textbf{Case $3$}: \quad
B = \begin{pmatrix}
2 & * & * \\
* & 2 & * \\
* & * & 2
\end{pmatrix}
\]
First, we consider Case $1$. Let $\mathrm{id}$ denote the identity element of $S_3$.  
Then we have
$b_{\mathrm{sum}} + T_{\mathrm{id}} = 18 + 6 \equiv 0$,
which contradicts equation~(\ref{sigma}).

Next, we consider Case $2$. In this case, note that 
$b_{2,2}, b_{2,3}, b_{3,2}, b_{3,3}$ are odd, and $b_{\mathrm{sum}}$ is even.  
Combining this observation with equation~(\ref{sigma}), we obtain
\[
b_{\mathrm{sum}} + 2 + b_{2,2} + b_{3,3} 
\equiv b_{\mathrm{sum}} + 2 + b_{2,3} + b_{3,2} 
\equiv 4.
\]
In particular,
\[
b_{2,2} + b_{3,3} \equiv b_{2,3} + b_{3,2}.
\]
Let $d = b_{2,2} + b_{3,3}$.  
Since $d$ is even, we have
\begin{align*}
b_{\mathrm{sum}} + T_{\mathrm{id}} 
&= 12 + 2b_{2,2} + 2b_{3,3} + b_{2,3} + b_{3,2} \\
&\equiv 12 + 3d \equiv 0, 
\end{align*}
which contradicts equation~(\ref{sigma}).

Finally, we consider Case $3$. In this case, note that 
$b_{1,2}, b_{2,1}, b_{2,3}, b_{3,2}, b_{3,1}, b_{1,3}$ are odd, and $b_{\mathrm{sum}}$ is even. 
Combining this observation with equation~(\ref{sigma}), we obtain
\[
b_{\mathrm{sum}} + 2 + b_{1,2} + b_{2,1} 
\equiv b_{\mathrm{sum}} + 2 + b_{2,3} + b_{3,2}
\equiv b_{\mathrm{sum}} + 2 + b_{3,1} + b_{1,3}
\equiv 4.
\]
In particular,
\[
b_{1,2} + b_{2,1} \equiv b_{2,3} + b_{3,2} \equiv b_{3,1} + b_{1,3}.
\]
Let $d = b_{1,2} + b_{2,1}$.  
Since $d$ is even, we have
\begin{align*}
b_{\mathrm{sum}} + T_{\mathrm{id}} 
&= 12 +  b_{1,2} + b_{2,1} + b_{2,3} + b_{3,2} + b_{3,1} + b_{1,3}\\
&\equiv 12 + 3d \equiv 0, 
\end{align*}
which contradicts equation~(\ref{sigma}).
\end{proof}

\begin{proof}[(Proof of Proposition~\ref{planarity})]
Let $G$ be a graph containing no $(0,3,4 \bmod 6)$-cycle.  
By Lemmas~\ref{K_5} and \ref{K_{3,3}}, the graph $G$ contains no subdivision of $K_5$ or $K_{3,3}$.  
Therefore, by Kuratowski's theorem, $G$ is planar. 
\end{proof}

%Proof of Main Theorem
\section{Proof of Main Theorem}
We proceed by induction on $n$.
If $2 \leq n \leq 4$, the graph $G$ contains no cycles, and hence 
\[
e(G) \le n-1 \le \frac{11}{8}n - \frac{7}{4}.
\]
Equality holds if and only if $n=2$ and $G \simeq H_0 (\simeq K_2)$. 
Therefore, it suffices to prove the statement for $n \geq 5$. 

Assume that the statement does not hold for $n$. 
That is, suppose that there exists an $n$-vertex graph $G$ containing no $(0,3,4 \bmod 6)$-cycles and satisfying one of the following conditions:
\begin{itemize}
\item $e(G) > (11/8) n - 7/4$,
\item $e(G) = (11/8) n - 7/4$ and, letting $k = (n-2)/8$, we have $G \not \simeq H_k$.
\end{itemize}
Furthermore, among all graphs satisfying one of the above conditions, we redefine $G$ to be one with the maximum number of edges.

\begin{lemma} \label{cut}
The following properties hold.
\begin{enumerate}
  \item[$(1)$] $G$ is $2$-connected.
  \item[$(2)$] If $\{x, y\}$ is a vertex cut of $G$, then $xy \notin E(G)$.
\end{enumerate}
\end{lemma}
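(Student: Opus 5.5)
We argue by contradiction in both parts, using the minimality of $n$ (the induction hypothesis for all smaller orders) together with the extremal choice of $G$. Write $f(m) = \frac{11}{8}m - \frac{7}{4}$. The key identity is
\[
f(m_1) + f(m_2) = f(m_1+m_2-1) - \tfrac{3}{8}.
\]
Similarly, $f(m_1)+f(m_2)-1 = f(m_1+m_2-2) - \tfrac{1}{4}$, using $\frac{11}{4} - \frac{7}{4} - 1 = 0$, $f(m_1+m_2-2)=\frac{11}{8}(m_1+m_2)-\frac{22}{8}-\frac74$, and the difference $-\frac{7}{4}+\frac{22}{8}-1 = 0$. So gluing at two vertices needs a separate, careful check.

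\textbf{Part (1).} Suppose first that $G$ is disconnected. Split it as $G_1 \cup G_2$ with $n_i \ge 1$, and add an edge joining the two parts. This creates no new cycle, so the result is still $(0,3,4 \bmod 6)$-cycle-free and has more edges. It also still violates the statement, since $e$ increases strictly, and $e > f(n)$ or $e = f(n)$ with non-isomorphism becomes $e > f(n)$. This contradicts the maximality of $e(G)$.

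So $G$ is connected. If it has a cut vertex $v$, write $G = G_1 \cup G_2$ with $V(G_1)\cap V(G_2) = \{v\}$ and $2 \le n_i < n$. Every cycle lies in one block, so each $G_i$ is cycle-free in the required sense. By induction $e(G_i) \le f(n_i)$, hence
\[
e(G) \le f(n_1)+f(n_2) = f(n) - \tfrac38 < f(n),
\]
a contradiction.

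\textbf{Part (2).} Let $\{x,y\}$ be a cut with $xy \in E(G)$. Write $G = G_1 \cup G_2$ with $V(G_1)\cap V(G_2)=\{x,y\}$, $xy$ in both parts, and $3 \le n_i < n$. Each $G_i$ is a subgraph of $G$, so it is admissible, and induction gives $e(G_i) \le f(n_i)$. Since the edge $xy$ is counted twice,
\[
e(G) = e(G_1)+e(G_2)-1 \le f(n_1)+f(n_2)-1 = f(n_1+n_2-2) = f(n).
\]
Hence $e(G) \le f(n)$, with equality only if $e(G_i) = f(n_i)$ for both $i$. In that case $G_i \simeq H_{k_i}$ by induction, with $n_i = 8k_i + 2$ and $k_i \ge 1$ because $n_i \ge 3$.

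\textbf{Main obstacle.} It remains to show that $G \simeq H_{k_1+k_2}$, which would contradict the choice of $G$. The difficulty is identifying the glued edge. We need the edge $xy$ in each $H_{k_i}$ to be $x_{k_i}y_{k_i}$, glued with matching ends; gluing along any other edge must either create a forbidden cycle or be impossible.

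I would use Remark~\ref{rmk:Hk}. If $xy \neq x_{k_1}y_{k_1}$ in $G_1$, then $xy$ lies on a $7$-face $C_1$ of $G_1$, so $G_1$ contains a path of length $6$ from $x$ to $y$. In $G_2$, the edge $xy$ lies on either a $5$-face or a $7$-face, giving a path of length $4$ or $6$ from $x$ to $y$. Combining the two paths yields a cycle in $G$ of length $10$ or $12$, which is $\equiv 4$ or $0 \pmod 6$, a contradiction. Thus $xy$ is the special edge in both $G_1$ and $G_2$. Since $H_k$ has an automorphism swapping $x_k$ and $y_k$ (which I would check from the symmetric construction of $A$), gluing two copies along this edge gives exactly $H_{k_1+k_2}$, completing the contradiction.
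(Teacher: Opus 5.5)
Your proposal is correct and follows the paper's proof essentially step for step. Part (1) uses the same edge-addition and cut-vertex counting. In part (2), the same count forces equality, and your $x$--$y$ paths of lengths $6$ and $4$ or $6$ are exactly the paper's cycles $C_s \triangle C'_t$ of length $l(C_s)+l(C'_t)-2 \in \{10,12\}$, ruled out via Remark~\ref{rmk:Hk}.

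Two small remarks. First, your preliminary identity should read $f(m_1)+f(m_2)-1 = f(m_1+m_2-2)$ with no $-\tfrac14$; your own computation and your later use in Part (2) already give this. Second, your observation that $H_k$ has an automorphism swapping $x_k$ and $y_k$ is correct and fills in a detail the paper leaves implicit when it concludes $G \simeq H_{k_1+k_2}$. The automorphism is the reflection of $A$ that interchanges $x$ and $y$ and fixes the middle path of length $3$.
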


\begin{proof}
$(1)$ 
Suppose that $G$ is disconnected. 
Then we may add an edge between two distinct components of $G$ without creating any new cycle. 
The resulting graph still contains no $(0,3,4 \bmod 6)$-cycle and has more edges than $G$, 
contradicting the choice of $G$ as a graph with the maximum number of edges. 
Therefore, $G$ is connected. 

Suppose that $G$ is not $2$-connected. 
Then there exist proper induced subgraphs $G_1$ and $G_2$ of $G$ such that  
$G = G_1 \cup G_2$, $|V(G_1)| \ge 2$, $|V(G_2)| \ge 2$, and $\lvert V(G_1) \cap V(G_2) \rvert  = 1$.
For $i = 1,2$, set $n_i = |V(G_i)|$.  
Then $n_1 + n_2 = n + 1$ and $2 \le n_1, n_2 \le n-1$.  
By the induction hypothesis, we have
$e(G_i) \leq (11/8) n_i - 7/4$. 
Hence,
\[
e(G) = e(G_1) + e(G_2)
   \leq \left(\frac{11}{8} n_1 - \frac{7}{4}\right)
      + \left(\frac{11}{8} n_2 - \frac{7}{4}\right)
   = \frac{11}{8} n - \frac{17}{8}
   < \frac{11}{8} n - \frac{7}{4},
\]
a contradiction.  
Therefore, $G$ is $2$-connected.

$(2)$ Suppose that $\{x, y\}$ is a vertex cut of $G$ and $xy \in E(G)$. 
Then there exist proper induced subgraphs $G_1$ and $G_2$ of $G$ such that  
$G = G_1 \cup G_2$, $|V(G_1)| \ge 3$, $|V(G_2)| \ge 3$, and $V(G_1) \cap V(G_2) = \{x, y\}$.
For $i = 1,2$, set $n_i = |V(G_i)|$.  
Then $n_1 + n_2 = n + 2$ and $3 \le n_1, n_2 \le n-1$.  
By the induction hypothesis, we have
$e(G_i) \leq (11/8) n_i - 7/4$. 
Hence,
\[
e(G) = e(G_1) + e(G_2) - 1
   \leq \left(\frac{11}{8} n_1 - \frac{7}{4}\right)
      + \left(\frac{11}{8} n_2 - \frac{7}{4}\right) - 1
   = \frac{11}{8} n - \frac{7}{4}.
\]
Therefore, by the assumption on $G$ we have $e(G) = (11/8) n - 7/4$, and hence for each $i=1,2$ we obtain $e(G_i) = (11/8) n_i - 7/4$. 
Setting $k_i = (n_i-2)/8$, we see that $k_i$ is an integer and $G_i \simeq H_{k_i}$.
From $n_1, n_2 \ge 3$, we obtain $k_1, k_2 \ge 1$.

For each $i=1,2$, let $\varphi_i \colon G_i \to H_{k_i}$ be an isomorphism. 
Let $C_1, C_2$ be the two faces of $G_1$ incident with the edge $xy$, and let $C'_1, C'_2$ be the two faces of $G_2$ incident with the edge $xy$. 
For $1 \le s,t \le 2$, $C_s \triangle C'_t$ is a cycle of $G$, and hence
\[
l(C_s) + l(C'_t) - 2 = l(C_s \triangle C'_t) \equiv 1,2,5 \pmod{6}.
\]
In particular, $\left( l(C_s), l(C'_t) \right) \notin \{(5,7), (7,5), (7,7) \}$. 
Therefore, by Remark~\ref{rmk:Hk}, 
\[ \varphi_i(xy) = x_{k_i} y_{k_i}\]
for each $i = 1,2$. 
It follows that $G \simeq H_{k_1 + k_2}$, contradicting the assumption on $G$.
\end{proof}

By Lemma~\ref{cut}, the graph $G$ is $2$-connected. 
Moreover, by Proposition~\ref{planarity}, $G$ is planar. 
Fix an embedding of $G$ into the plane, and regard $G$ as a plane graph.
Since $G$ is $2$-connected and planar, every face of $G$ is a cycle. 

Before considering the $5$-faces of $G$, we establish the following lemma.

\begin{lemma} \label{lem_5f}
If $C_1, C_2$ are two $5$-faces of $G$, then they intersect at exactly one edge. 
\end{lemma}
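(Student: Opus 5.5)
The plan is to show that two $5$-faces $C_1,C_2$ of $G$ must share an edge, and then that they share exactly one.

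\textbf{Sharing at least one edge.} Suppose first that $C_1$ and $C_2$ share no edge. I would derive a contradiction with the maximality of $G$, that is, produce a graph with more edges and no $(0,3,4 \bmod 6)$-cycle. One option is to use $2$-connectivity to take two disjoint paths joining $C_1$ and $C_2$ (Menger). Together with the arcs of the two $5$-cycles, these paths create several cycles whose lengths differ by controlled amounts. Two arcs of a $5$-cycle between the same endpoints have lengths $(1,4)$ or $(2,3)$. So combining the choices of arcs on $C_1$ and on $C_2$ gives cycles whose lengths are $L+\{2,5,8\}$ or $L+\{4,5,6,7\}$, where $L$ is the total length of the two connecting paths. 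Possibly the two paths meet the cycles at a single vertex; in that case I would use $2$-connectivity to get a second attachment.

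\textbf{Case analysis modulo $6$.} I would check that every combination of arc lengths forces some cycle of length $\equiv 0,3,4 \pmod 6$. For example, the sums $a+b$ with $a,b\in\{1,4\}$ take the values $2,5,8$, which cover three consecutive residues spaced by $3$. In general, the set $\{a+b\}$ with $a\in\{1,4\}$ or $\{2,3\}$, and likewise for $b$, is large enough modulo $6$ to hit the forbidden set $\{0,3,4\}$ whatever $L$ is. I would verify each of the pairs $(\{1,4\},\{1,4\})$, $(\{1,4\},\{2,3\})$ and $(\{2,3\},\{2,3\})$. If $C_1$ and $C_2$ share only a vertex, there is a single connecting point plus another path, and the same bookkeeping applies with one path of length $0$.

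\textbf{Sharing exactly one edge.} Once $C_1$ and $C_2$ share an edge, $C_1\cap C_2$ should be a path, using planarity and the fact that both are faces. If the common path has length $m$, then $C_1\triangle C_2$ is a cycle of length $10-2m$, and $m\in\{1,2,3,4\}$ gives lengths $8,6,4,2$. The lengths $6$ and $4$ are forbidden, and $m=4$ is impossible in a simple graph other than a single $5$-cycle, which is excluded since $n\ge5$ and $G$ is extremal. So $m=1$.

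\textbf{Degenerate configurations.} I also need to exclude faces meeting in several disjoint pieces. In that case $C_1\triangle C_2$ decomposes into cycles. I would argue via $2$-connectivity and planarity that two faces meeting in two components create a $2$-vertex cut $\{x,y\}$. If such a cut yields $xy\in E(G)$, this contradicts Lemma~\ref{cut}(2); otherwise the short arcs give a forbidden cycle.

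\textbf{Main obstacle.} I expect the hardest step to be the edge-disjoint case. The residue counting above is only heuristic, so it may not always hit $\{0,3,4\}$; the pair $(\{1,4\},\{1,4\})$, for instance, gives only $L+2,L+5,L+8$. If the case analysis fails, I would instead use extremality directly. Since $G$ is planar, adding a chord inside a $5$-face creates a triangle, which is useless. A more promising alternative is to show that a $5$-face not adjacent to another $5$-face can be used, via counting faces in Euler's formula, with the later discharging doing the work. Failing that, I would look for a third connecting path, using the face structure, so as to obtain more residue combinations.
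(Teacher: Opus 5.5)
There is a genuine gap, and it is the one you flag yourself: the case where $C_1$ and $C_2$ share no edge. Your arc bookkeeping does eliminate the non-adjacent attachment patterns. The mixed pattern gives $L+\{3,4,6,7\}$, and the $(2,3),(2,3)$ pattern gives $L+\{4,5,6\}$ (not $L+\{4,5,6,7\}$); both always hit $\{0,3,4\}$ modulo $6$. So the attachment vertices must be adjacent on both $5$-faces. But then the cycles you can see have lengths $L+2$, $L+5$, $L+8$, which are all $\equiv 2 \pmod 3$ when $L\equiv 0 \pmod 3$. Likewise, in the one-vertex case, $a_2+2$, $a_2+5$, $a_2+8$ survive whenever $a_2\equiv 0 \pmod 3$. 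No residue counting inside $C_1\cup C_2\cup P_1\cup P_2$ can exclude this.

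Your fallbacks do not close it either. The Euler count $(*)$ relies on $f_5\le 2$, i.e.\ on Lemma~\ref{5-face}, which is deduced from the present lemma. Your last idea points the wrong way: a third disjoint connecting path turns out \emph{not} to exist, and that is what you should exploit. (A smaller point: your claim that $C_1\cap C_2$ is a path ``by planarity'' once an edge is shared is unjustified. The girth count handles it: a disconnected intersection would split $C_1\triangle C_2$ into at least two cycles of length at least $5$ inside $10$ edges. That forces $C_1\cap C_2$ to be two vertices, whose short arcs form a cycle of length at most $4$.)

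The missing idea is to turn the surviving configuration into a $2$-vertex cut and then use the induction hypothesis, not merely edge-maximality. Since attachments are forced to be adjacent, three disjoint connecting paths would give three pairwise adjacent vertices on $C_1$. In the one-vertex case, two disjoint paths in $G-x$ with $a_2\equiv a_3\equiv 0$ close a cycle of length $a_2+a_3+6\equiv 0 \pmod 3$. So Menger yields a cut $\{x,y\}$ separating $C_1$ from $C_2$, with sides $G_1\supseteq C_1$ and $G_2\supseteq C_2$. Each side has two $(x,y)$-routes through its $5$-face whose lengths differ by exactly $3$. Pairing any $(x,y)$-path on one side with both routes on the other shows that all $(x,y)$-paths in $G_i$ have a common residue $r_i$ modulo $3$, with $r_1+r_2\equiv 2$.

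If $(r_1,r_2)\equiv(1,1)$, adding $xy$ (absent by Lemma~\ref{cut}(2)) contradicts maximality. If $(r_1,r_2)\equiv(0,2)$, adding $xy$ creates a $(0 \bmod 3)$-cycle through $G_2$, so that route fails. The paper instead takes $G_1'$, obtained from $G_1$ by adding a new vertex $z$ adjacent to $x$ and $y$, and $G_2'$, obtained from $G_2$ by identifying $x$ and $y$. Both have no $(0,3,4 \bmod 6)$-cycle, and $e(G_2')\ge e(G_2)-1$ because $G$ has no $4$-cycle. The induction hypothesis then gives $e(G)\le e(G_1')+e(G_2')-1\le \frac{11}{8}n-\frac{7}{4}$, where equality would force $G_1'\cong H_k$. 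This is impossible, since every vertex of $H_k$ lies on a $7$-face while every cycle through $z$ has length $\equiv 2 \pmod 3$. Your plan never invokes the induction hypothesis or its equality characterization, and without them the $(0,2)$ case cannot be handled.
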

\begin{proof}
Since $G$ contains no $(0,3,4 \bmod 6)$-cycle, we note that the girth of $G$ is at least $5$. 

Suppose that $C_1, C_2$ are two $5$-faces that do not intersect at exactly one edge.
We distinguish the following four cases:
\begin{itemize}
  \item $C_1 \cap C_2$ is disconnected.
  \item $C_1 \cap C_2$ is a path of length at least $2$.
  \item $C_1 \cap C_2 = \emptyset$.
  \item $C_1 \cap C_2$ is $K_1$.
\end{itemize}

Suppose first that $C_1 \cap C_2$ is disconnected.  
In this case, $E(C_1 \triangle C_2)$ can be decomposed into at least two cycles.  
Since the girth of $G$ is at least $5$, it follows that $E(C_1 \triangle C_2)$ can be decomposed into two $5$-cycles.  
Therefore, $C_1 \cap C_2$ is $2K_1$ (two isolated vertices).
Let $V(C_1 \cap C_2) = \{x, y\}$.  
Then $C_1 \triangle C_2$ contains a cycle of length 
\[
\dist_{C_1}(x,y) + \dist_{C_2}(x,y) \leq 4,
\]
which contradicts the fact that the girth of $G$ is at least $5$.

Suppose second that $C_1 \cap C_2$ is a path of length at least $2$. 
In this case, $C_1 \triangle C_2$ becomes either a $4$-cycle or a $6$-cycle, 
which is a contradiction.

From the above, it suffices to consider the cases where $C_1 \cap C_2 = \emptyset$ or $C_1 \cap C_2 \simeq K_1$, and we prove the following claim.

\begin{claim} \label{claim_5f_1}
The following properties hold (see Figure~\ref{fig:claim_5f_1}).
\begin{enumerate}
\item[$(1)$]
Suppose that $C_1 \cap C_2 = \emptyset$. 
Then there exist edges $v_1 v_2 \in E(C_1)$ and $w_1 w_2 \in E(C_2)$, and vertex-disjoint $(C_1, C_2)$-paths $P_1$ and $P_2$ such that $\End(P_i) = \{v_i, w_i\}$ for $i=1,2$. 
Moreover, there exist vertices $x \in V(P_1)$ and $y \in V(P_2)$ such that $\{x,y\}$ is a vertex cut of $G$. 

\item[$(2)$]
Suppose that $C_1 \cap C_2$ consists of a single vertex. 
Let $C_1 \cap C_2 = \{x\}$. 
Then there exist edges $xv_2 \in E(C_1)$ and $xw_2 \in E(C_2)$, and a $(C_1-x, C_2-x)$-path $P_2$ in $G - x$ such that $\End(P_2) = \{v_2, w_2\}$. 
Moreover, there exists a vertex $y \in V(P_2)$ such that $\{x, y\}$ is a vertex cut of $G$. 
\end{enumerate}
\end{claim}

\begin{proof}
%%%%%%%%%%%%
\begin{figure}[htbp]
\centering
\includegraphics[width = 0.9\textwidth]{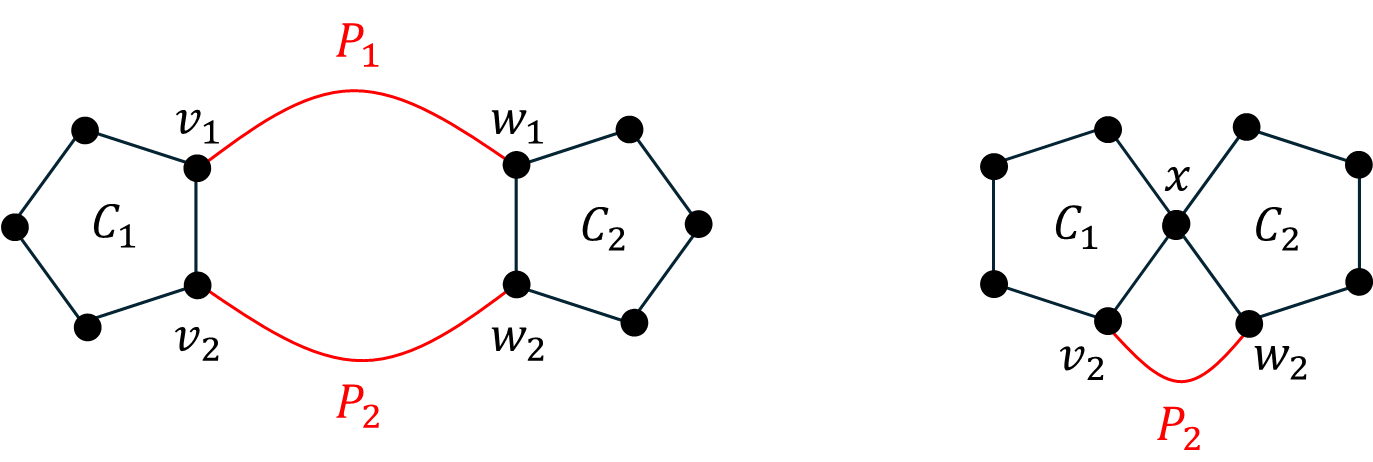}
\caption{Claim~\ref{claim_5f_1} $(1)$ and $(2)$}
\label{fig:claim_5f_1}
\end{figure}
%%%%%%%%%%%%

$(1)$ Suppose that $C_1 \cap C_2 = \emptyset$. 
Since $G$ is $2$-connected, there are two vertex-disjoint $(C_1, C_2)$-paths $P_1$, $P_2$. 
For $i = 1,2$, let $\End(P_i) \cap V(C_1) = \{v_i\}$ and $\End(P_i) \cap V(C_2) = \{w_i\}$, and put $l(P_i) = a_i$.  
Let $d_1 = \dist_{C_1}(v_1, v_2)$ and $d_2 = \dist_{C_2}(w_1, w_2)$.
If $(d_1, d_2) = (2,2)$, then $C_1 \cup C_2 \cup P_1 \cup P_2$ contains cycles of lengths  
$a_1 + a_2 + 4$, $a_1 + a_2 + 5$, and $a_1 + a_2 + 6$, 
one of which is congruent to $0$ or $3 \pmod{6}$, a contradiction.
If $(d_1, d_2) = (1,2)$ or $(2,1)$, then $C_1 \cup C_2 \cup P_1 \cup P_2$ contains cycles of lengths  
$a_1 + a_2 + 3$, $a_1 + a_2 + 4$, $a_1 + a_2 + 6$, and $a_1 + a_2 + 7$, 
one of which is congruent to $3$ or $4 \pmod{6}$, again a contradiction.
Thus, $(d_1,d_2) = (1,1)$. Hence, $v_1v_2 \in E(C_1)$ and $w_1w_2 \in E(C_2)$, which proves the first part.

Now suppose that there exists a $(C_1, C_2)$-path $P_3$ such that 
$P_1, P_2$, and $P_3$ are vertex-disjoint, and let $\End(P_3) \cap C_1 = \{v_3\}$.
By the previous argument, $v_1, v_2$, and $v_3$ must be mutually adjacent on $C_1$, a contradiction.  
Therefore, there exists a vertex cut $\{x, y\}$ in $G$ with $x \in V(P_1)$ and $y \in V(P_2)$, 
which completes the proof. 

$(2)$ Suppose that $C_1 \cap C_2 = \{x\}$. 
Since $G$ is $2$-connected, there is a $(C_1-x, C_2-x)$-path $P_2$ in $G - x$. 
Let $\End(P_2) \cap V(C_1) = \{v_2\}$ and $\End(P_2) \cap V(C_2) = \{w_2\}$, and put $l(P_2) = a_2$.
By considering cycles contained in $C_1 \cup C_2 \cup P_2$, we obtain, as in $(1)$, that $xv_2 \in E(C_1)$ and $xw_2 \in E(C_2)$, which proves the first part. 
Since $C_1 \cup C_2 \cup P_2$ contains cycles of lengths $a_2+2$, $a_2+5$, and $a_2+8$, it follows that $a_2 \equiv 0 \pmod{3}$.

Now suppose that there exists a $(C_1-x, C_2-x)$-path $P_3$  in $G - x$ such that 
$P_2$ and $P_3$ are vertex-disjoint.
Let $\End(P_3) \cap C_1 = \{v_3\}$ and $\End(P_3) \cap C_2 = \{w_3\}$, and put $l(P_3) = a_3$. 
By the previous argument, $xv_3 \in E(C_1)$, $xw_3 \in E(C_2)$ and $a_3 \equiv 0 \pmod{3}$. 
Since the graph $(C_1 \cup C_2 \cup P_2 \cup P_3) - x$ is a cycle of length $a_2 + a_3 + 6 \equiv 0 \pmod{3}$, we obtain a contradiction. 
Therefore, there exists $y \in V(P_2)$ such that $\{x, y\}$ is a vertex cut in $G$, 
which completes the proof. 
\end{proof}

Let $\{x,y\}$ be a vertex cut of $G$ as in Claim~\ref{claim_5f_1}.
Then there exist proper induced subgraphs $G_1$ and $G_2$ of $G$ such that  
$G = G_1 \cup G_2$, $V(G_1) \cap V(G_2) = \{x, y\}$, $V(C_1) \subseteq V(G_1)$ and $V(C_2) \subseteq V(G_2)$.

Claim~\ref{claim_5f_1} yields two paths in $G_1$, denoted by $Q_1,Q_2$, and two paths in $G_2$, denoted by $R_1,R_2$, defined as follows:
\[
Q_1=
\begin{cases}
P_1[x,v_1]\cup v_1v_2\cup P_2[v_2,y]
& (C_1\cap C_2=\emptyset),\\
xv_2\cup P_2[v_2,y]
& (\text{otherwise}), 
\end{cases}
\]
and let $Q_2=Q_1\triangle C_1$. Then $l(Q_2)=l(Q_1)+3$.
Similarly, define
\[
R_1=
\begin{cases}
P_1[x,w_1]\cup w_1w_2\cup P_2[w_2,y]
& (C_1\cap C_2=\emptyset),\\
xw_2\cup P_2[w_2,y]
& (\text{otherwise}), 
\end{cases}
\]
and let $R_2=R_1\triangle C_2$. Then $l(R_2)=l(R_1)+3$.

\begin{claim} \label{claim_5f_2}
The following properties hold.
\begin{enumerate}
\item[$(1)$]
For every $(x,y)$-path $Q$ in $G_1$, we have
$l(Q) \equiv l(Q_1) \pmod{3}$.

\item[$(2)$]
For every $(x,y)$-path $R$ in $G_2$, we have
$l(R) \equiv l(R_1) \pmod{3}$.

\item[$(3)$]
$l(Q_1)+l(R_1) \equiv 2 \pmod 3$.
\end{enumerate}
\end{claim}

\begin{proof}
We first prove $(1)$ and $(3)$. 
Let $Q$ be an arbitrary $(x,y)$-path in $G_1$. 
Since $Q\cup R_1$ and $Q\cup R_2$ are cycles in $G$, 
both $l(Q)+l(R_1)$ and $l(Q)+l(R_2)$ are  congruent to $1$, $2$, or $5 \pmod{6}$. 
Since $l(R_2)=l(R_1)+3$, this is possible only when
\[l(Q)+l(R_1) \equiv 2 \pmod{3}. \] 
In particular, taking $Q=Q_1$, we obtain 
\[ l(Q_1)+l(R_1) \equiv 2 \pmod{3},  \]
which proves $(3)$. 
Comparing these two congruences, we get 
$l(Q) \equiv l(Q_1) \pmod{3}$, and hence $(1)$ follows. 

The proof of $(2)$ is analogous to that of $(1)$. 
Indeed, by considering the cycles $Q_1 \cup R$ and $Q_2 \cup R$ for an arbitrary $(x,y)$-path $R$ in $G_2$, 
and using $l(Q_2) = l(Q_1)+3$, we obtain $l(R) \equiv l(R_1) \pmod{3}$. This proves $(2)$. 
\end{proof}
We return to the proof of Lemma~\ref{lem_5f}.

First, suppose that $l(Q_1)\equiv 1 \pmod{3}$.
By Claim~\ref{claim_5f_2} $(3)$, we have $\left(l(Q_1), l(R_1)\right) \equiv (1,1) \pmod{3}$. 
Since $\{x,y\}$ is a vertex cut of $G$, Lemma~\ref{cut} implies that $xy\notin E(G)$.
By Claim~\ref{claim_5f_2} $(1)$ and $(2)$, every $(x,y)$-path in $G$ has length congruent to $1$ modulo $3$.
Hence, $G+xy$ contains no $(0,3,4 \bmod 6)$-cycles.
This contradicts the maximality of $e(G)$.

Finally, suppose that $l(Q_1)\not \equiv 1 \pmod{3}$.
By Claim~\ref{claim_5f_2} $(3)$, we have $\left(l(Q_1), l(R_1)\right) \equiv (0,2) \text{ or } (2,0) \pmod{3}$.
By symmetry between $G_1$ and $G_2$, 
we may assume that $\left(l(Q_1), l(R_1)\right) \equiv (0,2) \pmod {3}$.

Define a graph $G_1'$ as follows:
\[
V(G_1')=V(G_1)\cup\{z\},
\qquad
E(G_1')=E(G_1)\cup\{xz,yz\}.
\]
Let $G_2'$ be the graph obtained from $G_2+xy$ by contracting the edge $xy$ into a vertex $w$ and suppressing all resulting parallel edges. 

By Claim~\ref{claim_5f_2} $(1)$, every $(x,y)$-path in $G_1$ has length congruent to $0$ modulo $3$.
Hence, every cycle of $G_1'$ containing $z$ has length congruent to $2$ modulo $3$. 
Since every cycle of $G_1'$ not containing $z$ is already a cycle of $G_1$, it follows that $G_1'$ contains no $(0,3,4 \bmod 6)$-cycle. 

Similarly, by Claim~\ref{claim_5f_2} $(2)$, every $(x,y)$-path in $G_2$ has length congruent to $2$ modulo $3$.
Every new cycle of $G'_2$ created by identifying $x$ and $y$ corresponds to an $(x,y)$-path in $G_2$, and hence has length congruent to $2$ modulo $3$. 
Since every other cycle of $G'_2$ corresponds to a cycle of $G_2$, the graph $G'_2$ also contains no $(0,3,4 \bmod 6)$-cycle.
Therefore, both $G_1'$ and $G_2'$ contain no $(0,3,4 \bmod 6)$-cycles. 

For $i=1,2$, let
\[
n_i:=|V(G_i)|,\qquad
n_i':=|V(G_i')|,
\qquad
e_i:=e(G_i),
\qquad
e_i':=e(G_i').
\]

Then $n=n_1+n_2-2, e(G)=e_1+e_2$. 
Moreover, $n_1'=n_1+1, n_2'=n_2-1$ and hence $n_1'+n_2'=n_1+n_2=n+2$. 
Since each of $G_1$ and $G_2$ contains a $5$-cycle, we have
$n_1' \ge 6, n_2' \ge 4$.
In particular, $4\le n_1',n_2'\le n-2$. 
Thus, by the induction hypothesis,
\[
e_i' \le \frac{11}{8} n_i' - \frac{7}{4}
\qquad (i=1,2).
\]
Moreover, since $xy \notin E(G)$, we have $e_1'=e_1+2$ and $e_2' = e_2-|N_{G_2}(x) \cap N_{G_2}(y)|$. 
Since $G$ contains no $4$-cycle, we have $|N_{G_2}(x) \cap N_{G_2}(y)|\le 1$,
and hence $e_2'\ge e_2-1$.
Therefore, $e_1'+e_2' \ge e_1+e_2+1 = e(G)+1$.

Consequently,
\[
\begin{aligned}
e(G)
&\le e_1'+e_2'-1\\
&\le
\left(\frac{11}{8}n_1'-\frac74\right)
+
\left(\frac{11}{8}n_2'-\frac74\right)
-1\\
&=
\frac{11}{8}(n_1'+n_2')-\frac92\\
&=
\frac{11}{8}(n+2)-\frac92\\
&=
\frac{11}{8}n-\frac74.
\end{aligned}
\]
By the assumption on $G$, $e(G) = (11/8) n - 7/4$ 
and therefore equality holds throughout the above inequalities.
In particular, $e_1' = (11/8) n_1' - 7/4$.
By the induction hypothesis, letting $k = (n_1'-2)/8$, we have $k \in \mathbb{Z}_{>0}$ and 
$G_1' \simeq H_k$.
Let $\varphi : G_1' \to H_k$ denote this isomorphism.
Since every cycle of $G_1'$ containing $z$ has length congruent to $2$ modulo $3$, every cycle in $H_k$ through $\varphi(z)$ also has length congruent to $2$ modulo $3$.
This contradicts Remark~\ref{rmk:Hk}.

Therefore, the lemma follows.
\end{proof}

\begin{lemma} \label{5-face}
$G$ has at most two $5$-faces. 
\end{lemma}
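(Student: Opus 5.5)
Suppose, for contradiction, that $G$ has three distinct $5$-faces $C_1, C_2, C_3$. By Lemma~\ref{lem_5f}, any two of them meet in exactly one edge. Write $e_{12}=C_1\cap C_2$, $e_{13}=C_1\cap C_3$, $e_{23}=C_2\cap C_3$. The plan is to rule out every possible configuration of these three edges, either by exhibiting a short forbidden cycle or by using planarity.

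First suppose the three edges are not all equal. Then at most one pair can coincide. If $e_{12}=e_{13}$, that edge would lie on three faces $C_1,C_2,C_3$, which is impossible in a $2$-connected plane graph where each edge lies on exactly two faces. So the three edges are pairwise distinct. Inside the $5$-cycle $C_1$ there are two distinct edges $e_{12}$ and $e_{13}$, and they are either adjacent or at distance one apart along $C_1$.

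In either subcase we examine $C_1\triangle C_2$ and $C_1\triangle C_3$; these are $8$-cycles, which are allowed. The useful cycle is the boundary of the union of the three faces. The symmetric difference $C_1\triangle C_2\triangle C_3$ has $15-2\cdot 3=9$ edges. Once we check that it forms a single cycle (or decomposes into cycles, each of which has length at least $5$), we get a $9$-cycle, which is $\equiv 3 \pmod 6$ and hence forbidden. If it decomposes instead, the only possible splitting into cycles of length at least $5$ is $9=5+4$, which is impossible since $G$ has no $4$-cycle, or the set is a single cycle.

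The care needed is to verify that $C_1\triangle C_2\triangle C_3$ is nonempty and is an edge-disjoint union of cycles. This holds because it is a sum in the cycle space and every vertex has even degree in it. Its total length is exactly $9$, since each edge is shared by at most two of the three faces and the three pairwise intersections are distinct single edges. Decomposing $9$ into cycle lengths that are at least the girth $5$ forces a single $9$-cycle, giving the contradiction. There is also a degenerate possibility to handle: the three faces could pairwise share edges around a common vertex $v$, so that $C_1\cup C_2\cup C_3$ covers all faces at $v$ when $\deg v=3$. The parity and length count is unaffected, so the $9$-edge set is still a union of cycles.

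The main obstacle I expect is the remaining case where $e_{12}=e_{13}=e_{23}$, which is excluded by the two-faces-per-edge fact, together with confirming that no other degeneracy arises. One such degeneracy is when $C_1\triangle C_2\triangle C_3$ is empty, which would mean $G=C_1\cup C_2\cup C_3$ with every edge in exactly two of the faces. Then $G$ would have only three faces, and by counting, $2e=15$, which is not an integer, so this cannot happen. With these cases handled, the $9$-cycle contradiction completes the argument that $G$ has at most two $5$-faces.
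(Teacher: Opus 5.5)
Your proof is correct and is essentially the paper's argument: by Lemma~\ref{lem_5f} the set $E(C_1\triangle C_2\triangle C_3)$ has exactly $9$ edges and decomposes into cycles of length at least $5$, so it is a single $9$-cycle, which is forbidden. Your explicit check that the three shared edges are pairwise distinct (each edge lies on exactly two faces) spells out what the paper leaves implicit in ``$|E(H)|=9$''. The side remarks about adjacency along $C_1$, the degree-$3$ vertex, the empty case and the ``$9=5+4$'' sentence are unnecessary and can be dropped.
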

\begin{proof}
Suppose that $C_1, C_2, C_3$ are three $5$-faces. 
Let $H = C_1 \triangle C_2 \triangle C_3$.  
By Lemma~\ref{lem_5f}, we have $|E(H)| = 9$.  
Since $E(H)$ can be decomposed into one or more cycles, this contradicts the fact that $G$ contains neither cycles of length at most $4$ nor a $9$-cycle.
\end{proof}

We next consider $7$-faces. 

\begin{lemma} \label{7-face}
The following properties hold. 
\begin{enumerate}
  \item[$(1)$] Let $C_1$ and $C_2$ be distinct $7$-faces of $G$ with $e(C_1 \cap C_2) \ge 1$.  
Then $C_1 \cap C_2$ is a path of length $3$. 
  \item[$(2)$] Every $7$-face of $G$ shares edges with at most one other $7$-face. 
\end{enumerate}
\end{lemma}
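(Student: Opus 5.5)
Part (1) is a length count on $C_1 \triangle C_2$. Part (2) applies the same count to $C \triangle C_1 \triangle C_2$. Throughout I use that every face of $G$ is a cycle (Lemma~\ref{cut} and planarity), that $G$ has girth at least $5$, and that every cycle of $G$ has length $\equiv 1,2,5 \pmod 6$. So the admissible cycle lengths up to $14$ are $5,7,8,11,13,14$.

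\emph{Part (1).} Let $m=e(C_1\cap C_2)\ge 1$. Then $e(C_1\triangle C_2)=14-2m$, and $E(C_1\triangle C_2)$ decomposes into cycles.

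First suppose $C_1\cap C_2$ is a path, so $C_1\triangle C_2$ is a single cycle of length $14-2m$.
\begin{itemize}
\item $m=1,2,4$ give lengths $12,10,6$, which are $\equiv 0,4,0 \pmod 6$; these are forbidden.
\item $m=5$ gives length $4$, and $m=6$ gives length $2$; both are too short for the girth.
\item Hence $m=3$, which is the claim.
\end{itemize}

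Now suppose $C_1\cap C_2$ is disconnected and contains an edge. Then $C_1\triangle C_2$ splits into at least two cycles, each of length at least $5$, with total length at most $12$. So it is exactly two cycles, of lengths $(5,5)$ with $m=2$, or $(5,7)$ with $m=1$.

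To exclude these, I follow the proof of Lemma~\ref{lem_5f}. Write $C_1$ and $C_2$ as alternating unions of arcs $\alpha_i$, $\beta_i$ and the common pieces $S_j$; in the plane the common pieces occur in the same cyclic order on both faces. Besides $\alpha_i\cup\beta_i$, I also form cycles such as $\alpha_1\cup S_1\cup\beta_2$ closed through a common piece, and cycles using $\dist_{C_i}$ between the two components of the intersection.

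The arc lengths satisfy $\sum(\alpha)+\sum(S)=\sum(\beta)+\sum(S)=7$, which leaves only finitely many small configurations. Checking them, each one yields a cycle of length at most $4$, or of length $6$, $9$ or $10$, and each of these is forbidden. This finite case check is the step I expect to be the main obstacle: one has to choose the auxiliary cycles carefully so that every configuration is killed.

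\emph{Part (2).} Suppose a $7$-face $C$ shares edges with two distinct $7$-faces $C_1$ and $C_2$. By (1), $P_i=C\cap C_i$ is a path of length $3$ for $i=1,2$. An edge lies on exactly two faces, so $P_1$ and $P_2$ are edge-disjoint, and no edge of $C$ lies on both $C_1$ and $C_2$.

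Let $t=e(C_1\cap C_2)$; these $t$ edges lie off $C$. Put $H=C\triangle C_1\triangle C_2$. Then
\[
e(H)=21-2\cdot 3-2\cdot 3-2t=9-2t .
\]

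\begin{itemize}
\item If $t=0$, then $H$ has $9$ edges. A cycle decomposition with parts of length at least $5$ forces $H$ to be a single $9$-cycle, which is forbidden since $9\equiv 3 \pmod 6$.
\item If $t\ge1$, then by (1) we have $t=3$, so $e(H)=3$. This cannot be a nonempty union of cycles of length at least $5$. Hence $H$ is empty, with the caveat that $e(H)=3\ne 0$ as computed. Any genuine degenerate case would require $C,C_1,C_2$ to be all the faces of $G$, since in a $2$-connected plane graph no proper nonempty set of faces has zero boundary sum mod $2$. Then $G$ is a theta graph with $e(G)=n+1$, and $n+1<\frac{11}{8}n-\frac74$ fails to make $G$ a counterexample for the relevant $n$. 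This contradicts the choice of $G$.
\end{itemize}

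Either way we reach a contradiction, which proves (2).
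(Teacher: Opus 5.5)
Your path case in (1) is correct, and your count in (2) is the paper's argument: $e(C\triangle C_1\triangle C_2)\in\{3,9\}$, and neither value is possible for a nonempty union of cycles of admissible lengths.

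\textbf{The gap: the disconnected case of (1).} After reducing to two cycles of lengths $(5,5)$ or $(5,7)$, you only assert that ``a finite case check'' with suitably chosen auxiliary cycles kills every configuration. You flag this yourself as the main obstacle. It is also the only nontrivial part of (1), and as written it is not proved.

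\textbf{The missing idea is a single arc-swap.} Let $P_1,P_2$ be the two components of $C_1\cap C_2$, with endvertices $x_1,y_1,x_2,y_2$ in this order on both faces. Put $\alpha_1=C_1[y_1,x_2]$, $\alpha_2=C_1[y_2,x_1]$, $\beta_1=C_2[y_1,x_2]$, $\beta_2=C_2[y_2,x_1]$, so that the two cycles of $C_1\triangle C_2$ are $\alpha_1\cup\beta_1$ and $\alpha_2\cup\beta_2$.
\begin{enumerate}
\item Replace $\alpha_1$ by $\beta_1$ in $C_1$, and $\beta_1$ by $\alpha_1$ in $C_2$. This gives two cycles of lengths $7+d$ and $7-d$, where $d=l(\beta_1)-l(\alpha_1)$.
\item These lengths sum to $14$, and the only admissible split of $14$ into two cycle lengths is $7+7$. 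Hence $d=0$, and likewise $l(\alpha_2)=l(\beta_2)$.
\item Then both cycles of $C_1\triangle C_2$ are even, so each has length at least $8$. Their total is at least $16>14$, a contradiction.
\end{enumerate}
In your reduced setting it is even quicker. A $5$-cycle $\alpha_i\cup\beta_i$ makes $d_i=l(\beta_i)-l(\alpha_i)$ odd, so the two swapped cycles of lengths $7\pm d_i$ are both even, and one of them has length at most $6$. This argument also makes your reduction to $(5,5)$ or $(5,7)$ unnecessary.

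\textbf{A smaller issue in (2).} You should stop as soon as you obtain $e(H)=3$. Then $H$ is a nonempty even subgraph, so it contains a cycle of length at most $3$, contradicting girth at least $5$. The subsequent sentences (``Hence $H$ is empty\dots'' and the theta-graph discussion) are self-contradictory and should be deleted. There is no degenerate case, because $e(H)$ was computed exactly.
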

\begin{proof}
$(1)$
Let $c$ denote the number of connected components of $C_1 \cap C_2$.
Since $E(C_1\triangle C_2)$ can be decomposed into $c$ cycles and the girth of $G$ is at least $5$, we have
\[
5c \le e(C_1\triangle C_2) \le e(C_1)+e(C_2) = 14.
\]
Hence, $c \le 2$.

If $c=1$, then $C_1\cap C_2$ is a path. 
Let $a$ denote the length of this path. 
Then $a \ge 1$, and $C_1\triangle C_2$ is a cycle of length $14-2a$.
Since $C_1 \triangle C_2$ is not a $(0,3,4 \bmod 6)$-cycle, we must have $a = 3$, and hence the conclusion follows.
Therefore, it suffices to consider the case where $c=2$. 

Let $P_1$ and $P_2$ be the connected components of $C_1\cap C_2$. 
Then each of $P_1$ and $P_2$ is either a path or a single vertex. 
We regard a single vertex as a path of length $0$. 
Fix an orientation of $C_1$. 
For $i=1,2$, let $x_i,y_i$ denote the endvertices of $P_i$ (if $l(P_i)=0$, then let $x_i=y_i$). 
Assume that $x_1,y_1,x_2,y_2$ appear on $C_1$ in this order according to the orientation. 
Choose an orientation of $C_2$ so that $x_1,y_1,x_2,y_2$ also appear in this order on $C_2$. 
See Figure~\ref{fig_7faces} for an illustration. 

%%%%%%%%%%%%
\begin{figure}[htbp]
\centering
\includegraphics[width = 5cm]{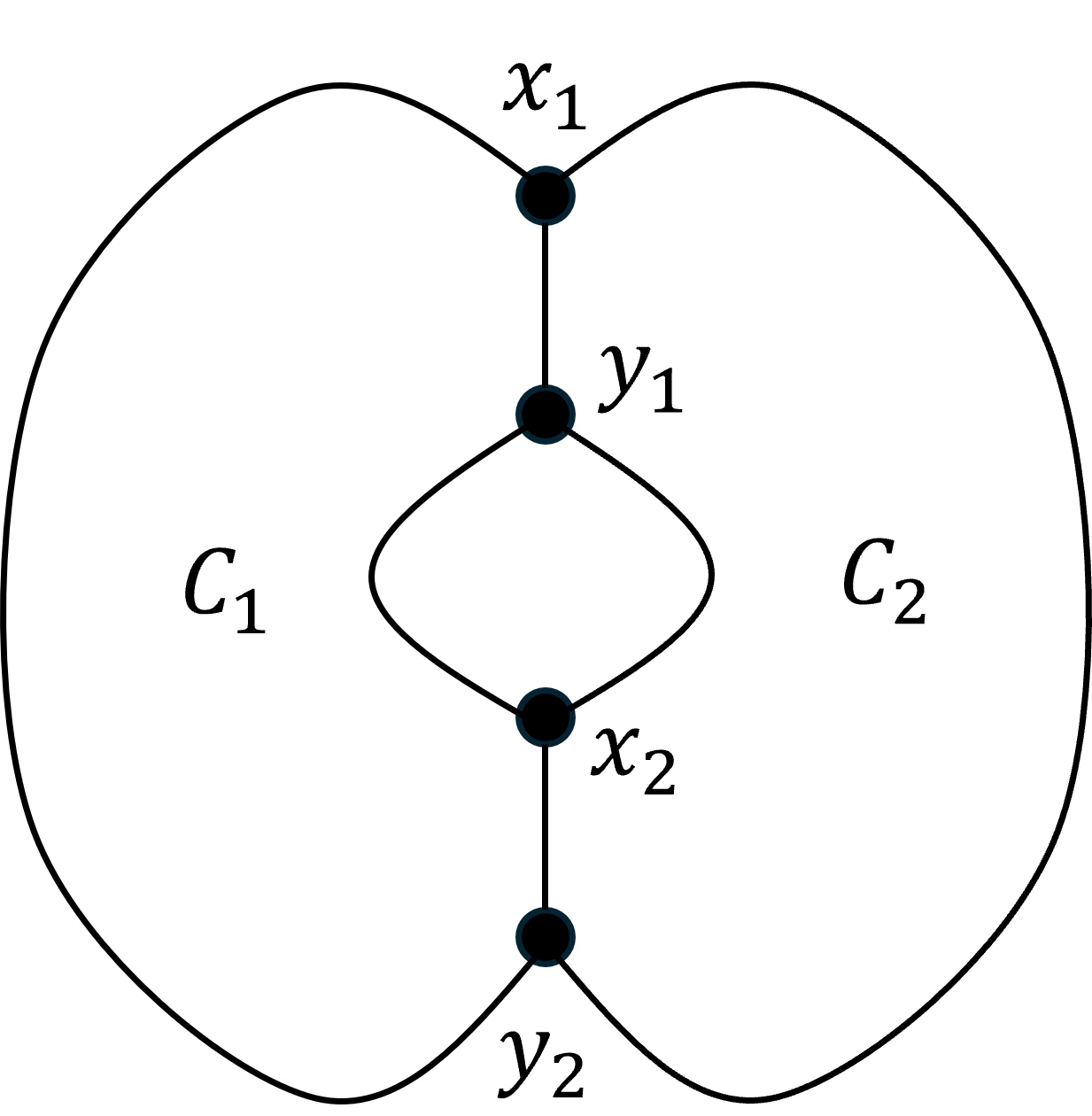}
\caption{The $7$-faces $C_1$ and $C_2$.}
\label{fig_7faces}
\end{figure}
%%%%%%%%%%%%

Define
\[
C_1':=C_1[x_2,y_1]\cup C_2[y_1,x_2],\qquad C_2':=C_1[y_1,x_2]\cup C_2[x_2,y_1].
\]

Then both $C_1'$ and $C_2'$ are cycles in $G$, and $l(C_1')+l(C_2')=l(C_1)+l(C_2)=14$. 
Since $G$ contains no $(0,3,4 \bmod 6)$-cycles, we have $l(C_1')=l(C_2')=7$. 
Therefore, $l(C_1[y_1,x_2])=l(C_2[y_1,x_2])$ and $l(C_1[y_2,x_1])=l(C_2[y_2,x_1])$. 
Hence, the following two cycles:
\[
C_1[y_1,x_2] \cup C_2[y_1,x_2],\qquad C_1[y_2,x_1]\cup C_2[y_2,x_1]
\]
both have even length, and the sum of their lengths is at most $14$. 
This contradicts the fact that $G$ contains no $(0,3,4 \bmod 6)$-cycles. 

$(2)$
Suppose that there exist distinct $7$-faces $C_1,C_2,C_3$ of $G$ such that $e(C_1 \cap C_2) \ge 1$ and $e(C_1 \cap C_3) \ge 1$.
By $(1)$,
\[
e(C_1\cap C_2)=3,
\qquad
e(C_1\cap C_3)=3,
\qquad
e(C_2\cap C_3)\in\{0, 3\}.
\]
Hence,
\[
\begin{aligned}
e(C_1\triangle C_2\triangle C_3)
&=
e(C_1)+e(C_2)+e(C_3) \\
&\quad
-2\bigl(e(C_1\cap C_2)+e(C_2\cap C_3)+e(C_3\cap C_1)\bigr)\\
&\in
\{3, 9\}.
\end{aligned}
\]
However, $E(C_1\triangle C_2\triangle C_3)$ can be decomposed into cycles, contradicting the fact that $G$ contains no $(0,3,4 \bmod 6)$-cycles.
\end{proof}

\begin{definition}[$7$-face block]
Let $B_1$ and $B_2$ be the plane graphs shown in Figure~\ref{fig_B1B2}. 
A $7$-face block of $G$ is defined as follows. 

If a $7$-face $C$ of $G$ shares no edge with any other $7$-face, 
then $C$ is called a $7$-face block of type $B_1$. 

If two distinct $7$-faces $C_1$ and $C_2$ of $G$ share an edge, 
then the subgraph $C_1 \cup C_2$ is called a $7$-face block of type $B_2$. 
\end{definition}

%%%%%%%%%%%%
\begin{figure}[htbp]
\centering
\includegraphics[width = 10cm]{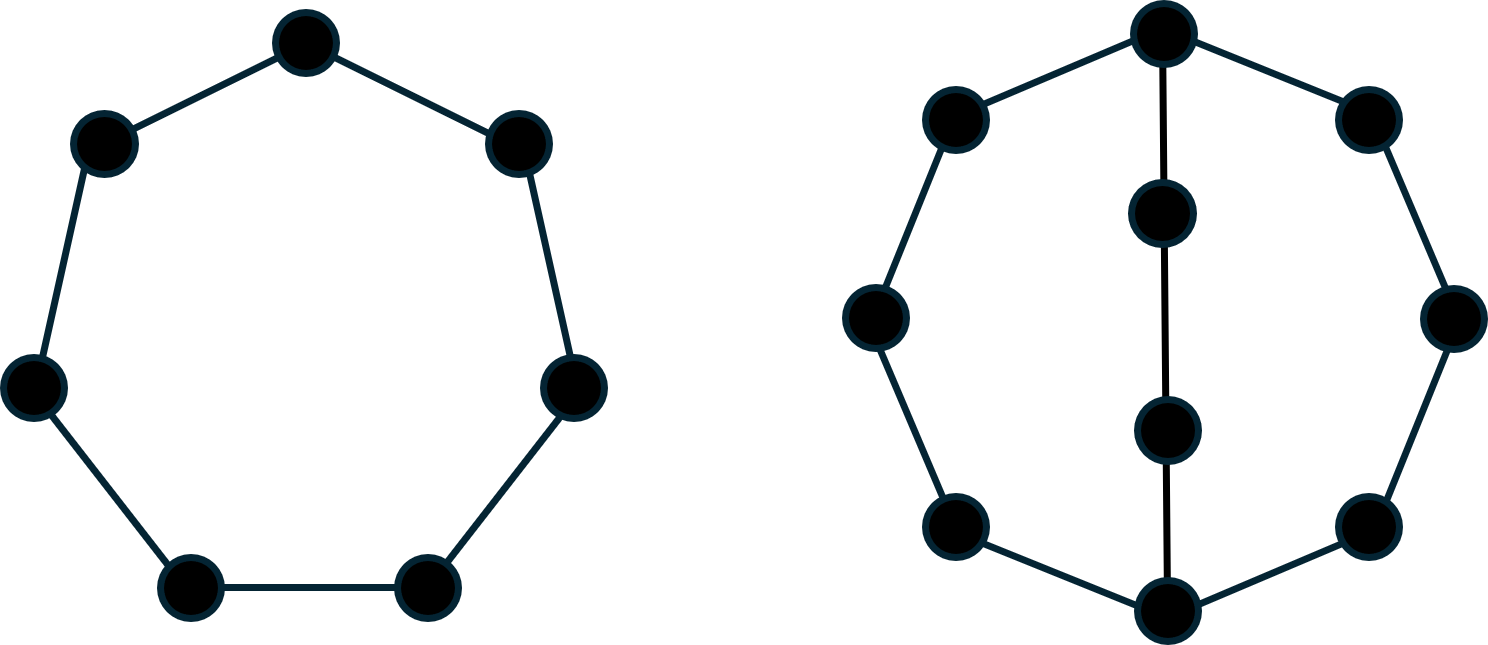}
\caption{The plane graphs $B_1$ (left) and $B_2$ (right).}
\label{fig_B1B2}
\end{figure}
%%%%%%%%%%%%

\begin{remark}
By Lemma~\ref{7-face}, every $7$-face of $G$ is contained in exactly one $7$-face block. 
Moreover, distinct $7$-face blocks are edge-disjoint. 
\end{remark}

Let $f$ denote the number of faces of $G$, and let $e$ denote the number of edges of $G$. 
For each $i\ge 3$, let $f_i$ denote the number of $i$-faces of $G$. 
Since $G$ contains no $(0,3,4 \bmod 6)$-cycles, we have $f_3=f_4=f_6=0$. 
By Lemma~\ref{5-face}, we have $f_5\le 2$. 

Let $b_1,b_2$ denote the numbers of $7$-face blocks of type $B_1$ and $B_2$, respectively. 
Then $f_7=b_1+2b_2$. 
Since $e(B_1)=7$ and $e(B_2)=11$, and distinct $7$-face blocks are edge-disjoint, we have $e\ge 7b_1+11b_2$. 
Therefore, 
\[
f_7=b_1+2b_2
=\frac{2}{11}(7b_1+11b_2)-\frac{3}{11}b_1
\le \frac{2}{11}e.
\]
We next prove
\[
3f_5+f_7
\le \frac{2}{11}e+\frac{64}{11}.
\tag{$*$}
\]
If $f_5 \le 1$, then
\[
3f_5 + f_7 \le 3 + \frac{2}{11}e < \frac{2}{11}e + \frac{64}{11}. 
\]
Thus the inequality in $(*)$ is strict when $f_5 \le 1$. 

Hence we may assume that $f_5=2$. 
Then, by Lemma~\ref{lem_5f}, the two $5$-faces share exactly one edge, and this edge is not contained in any $7$-face block. Hence, $e \ge 7b_1 + 11b_2 + 1$. 
It follows that
\[ f_7 = \frac{2}{11}(7b_1+11b_2)-\frac{3}{11}b_1 \le \frac{2}{11}(e-1) = \frac{2}{11}e-\frac{2}{11}. \]
Therefore, 
\[ 3f_5 + f_7 \le 6+\frac{2}{11}e-\frac{2}{11} = \frac{2}{11}e+\frac{64}{11}. \]
This proves $(*)$. 

Moreover, equality in $(*)$ holds if and only if
\[
f_5=2,\qquad b_1=0,\qquad
f_7=2b_2=\frac{2}{11}(e-1).
\]
Hence, 
\[
\begin{aligned}
2e
&=\sum_{i\ge 3} i f_i \\
&=5f_5+7f_7+\sum_{i\ge 8} i f_i \\
&\ge 5f_5+7f_7+8(f-f_5-f_7) \\
&=8f-3f_5-f_7 \\
&\ge 8f-\frac{2}{11}e-\frac{64}{11}
\qquad \text{(by $(*)$)} \\
&=8(e-n+2)-\frac{2}{11}e-\frac{64}{11}
\qquad \text{(by Euler's formula)} \\
&=\frac{86}{11}e-8n+\frac{112}{11}.
\end{aligned}
\]
Therefore, $e \le (11/8)n - 7/4$. 
By the assumption on $G$, we have $e =  (11/8)n - 7/4$, 
and hence every inequality above must be an equality. 
In particular, $f_i=0$  $(i\ge 9)$, 
and equality holds in $(*)$.

Let $k:=(n-2)/8$. 
Then the above argument yields the following properties:
\begin{itemize}
\item $k\in \mathbb{Z}_{>0}$, $n=8k+2$, $e=11k+1$, and $f=3k+1$.
\item $f_5=2$, $f_7=2k$, $f_8=k-1$, and $f_i=0$ for $i\notin\{5,7,8\}$.
\item $G$ has exactly $k$ $7$-face blocks, and all of them are of type $B_2$.
\item The two $5$-faces of $G$ share an edge.
\end{itemize}

\begin{definition}
Let $H$ be a plane graph, and let $C$ be an even face of $H$. 
If $x,y\in V(C)$ satisfy $\dist_C(x,y) = l(C)/2$, then $\{x, y\}$ is called an \emph{antipodal pair of $C$}. 

Moreover, let $H$ be a plane subgraph of $G$, and let $C$ be a face of $H$ of even length. Let $P$ be a path in $G$. If $\End(P)$ is an antipodal pair of $C$ and every point of the drawing of $P$, except for its endvertices, lies in the interior of $C$, 
then $P$ is called an \emph{antipodal path of $C$}.
\end{definition}

Let $G'$ be the graph obtained from $G$ by deleting the edge shared by the two $5$-faces and the edges shared by the two $7$-faces in each $7$-face block, and then removing isolated vertices.
The above properties imply the following lemma.

\begin{lemma} \label{lem_8gon}
The graph $G'$ is an $8$-angulation with $2k$ $8$-faces, and its plane dual $(G')^*$ is bipartite. 
Moreover, there exist distinct $8$-faces $C_1,C_2,\ldots,C_k,C'$ of $G'$ and paths $Q_1,Q_2,\ldots,Q_k,R$ in $G$ such that:
\begin{itemize}
\item the faces $C_1,C_2,\ldots,C_k$ are pairwise edge-disjoint; 
\item for each $i\in\{1,\ldots,k\}$, $Q_i$ is an antipodal path of $C_i$ with $l(Q_i)=3$; 
\item $R$ is an antipodal path of $C'$ with $l(R)=1$; 
\item 
$G=G'\cup R\cup \bigcup_{i=1}^k Q_i$. 
\end{itemize}
\end{lemma}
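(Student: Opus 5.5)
We will read off the structure of $G'$ directly from the face data established above. Each $7$-face block is of type $B_2$, i.e.\ two $7$-faces sharing a path of length $3$ (Lemma~\ref{7-face}(1)). Deleting the three inner edges of that path, and its two interior vertices, which have degree $2$ in $G$ and become isolated, merges the two $7$-faces into one face. Its boundary is $C_1\triangle C_2$, a cycle of length $14-6=8$. Likewise the two $5$-faces share exactly one edge (Lemma~\ref{lem_5f}), and deleting it merges them into an $8$-face bounded by $C_1\triangle C_2$, of length $5+5-2=8$.

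First we check that the interior vertices of the shared $3$-path have degree $2$. An interior vertex of a common boundary path of two faces lying on both faces has its rotation consisting only of the two path edges. Any other edge at that vertex would lie in a face between them, contradicting that both faces contain both path edges consecutively. So exactly those vertices are removed.

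Next we check that all these deletions are edge-disjoint and that the original $8$-faces are untouched. The deleted edges lie in distinct $7$-face blocks, which are edge-disjoint by the Remark, and the $5$-face edge lies in no block. Every deleted edge is bordered by two faces that merge, so the remaining $k-1$ original $8$-faces stay faces of $G'$. Hence the faces of $G'$ are:
\begin{itemize}
\item $k$ merged faces $C_i$, each containing an antipodal path $Q_i$ of length $3$;
\item one merged face $C'$, containing an antipodal path $R$ of length $1$;
\item the $k-1$ old $8$-faces.
\end{itemize}
That gives $2k$ $8$-faces in total. Each merged boundary is a genuine cycle because $C_1\cap C_2$ is a single path, as shown in Lemmas~\ref{lem_5f} and \ref{7-face}. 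Antipodality holds because the shared path splits each face into arcs of lengths $2$ and $2$ (for the $5$-faces, $5-1=4$ on each side, hence $2$ and $2$), or $4$ and $4$ for the $7$-faces ($7-3=4$). Thus the endpoints are at distance $4$ on the $8$-cycle. The $C_i$ are pairwise edge-disjoint because they are boundaries of distinct edge-disjoint blocks, and clearly $G=G'\cup R\cup\bigcup Q_i$.

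The main obstacle is the bipartiteness of $(G')^*$. Since $G'$ is a $2$-connected plane graph (merging faces across an ear preserves $2$-connectivity) whose faces all have even length $8$, $G'$ is bipartite. That concerns $G'$, however, not its dual. For the dual we must show every vertex of $G'$ has even degree, since the dual of a plane graph is bipartite iff the graph is Eulerian.

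Degrees are preserved at all vertices except the endpoints of the deleted paths. Each such endpoint loses exactly one edge, and the endpoints of different deleted paths could coincide. So I would instead show directly that the faces of $G'$ can be $2$-coloured. Consider a face-cycle in the dual and look at it in $G$: each crossing between faces of $G'$ is a crossing between faces of $G$. The $G'$-face-cycle lifts to a closed walk in $G^*$ in which each merged face is possibly traversed through its internal edge. Parity must therefore come from cycle lengths in $G$, using a "no cycle of length $\equiv 0,3,4 \pmod 6$" argument.

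A cleaner route I would try first is a counting argument. Euler's formula gives $n'-e'+2k=2$, and the face count gives $8\cdot 2k=2e'$. So $e'=8k$ and $n'=6k$, giving average degree $8/3$, which is not immediately decisive. Failing that, I would argue vertex by vertex. Suppose a vertex $v$ of $G'$ has odd degree. Then $v$ is incident with an odd number of deleted edges, and the faces around $v$ in $G$ alternate along its rotation. I would then exhibit two faces around $v$ whose symmetric difference has length $\equiv 0$ or $4 \pmod 6$, using that faces of $G$ have lengths $5,7,8$ meeting in paths of controlled length. This local parity analysis is where I expect the real work to lie.
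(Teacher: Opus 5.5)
Your structural part is sound and matches the paper. This covers merging the two $5$-faces across $R$ and each pair of $7$-faces across $Q_i$ into $8$-faces, the count $k+1+(k-1)=2k$, antipodality, edge-disjointness of the $C_i$, and $G=G'\cup R\cup\bigcup_i Q_i$. One small slip: for the $5$-faces, the two arcs of $C'$ between the ends of $R$ have lengths $4$ and $4$, not ``$2$ and $2$''; your conclusion of distance $4$ is still right.

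\textbf{The gap.} You leave the bipartiteness of $(G')^*$ unproved. The lifting argument via forbidden cycle lengths and the vertex-by-vertex degree-parity analysis are only plans, and no cycle-length argument is needed. You already have both ingredients:
\begin{itemize}
\item $e(G')=\tfrac12\cdot 8\cdot 2k=8k$;
\item $C_1,\dots,C_k$ are pairwise edge-disjoint $8$-cycles, so together they contain $8k$ distinct edges, i.e.\ all of $E(G')$.
\end{itemize}
Hence every edge of $G'$ lies on exactly one $C_i$. Since every face of $G'$ is a cycle, each edge borders two distinct faces. The other face at that edge is therefore $C'$ or one of the $k-1$ old $8$-faces. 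So every edge of $(G')^*$ joins $\{C_1,\dots,C_k\}$ to the set of the remaining $k$ faces, which is the required bipartition; this is exactly the paper's argument. After computing $e'=8k$ you compared it with $n'$ (average degree) instead of with $\sum_i e(C_i)$, which is where the proof was.

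\textbf{A secondary error.} Your claim that deleting an ear preserves $2$-connectivity is false in general. For example, take two cycles sharing one vertex and joined by an extra ear; removing the ear leaves a cut vertex. It does no harm here, because every face of $G'$ is a cycle, which already gives connectivity and that each edge borders two distinct faces. Also, bipartiteness of $G'$ itself is not part of this lemma.
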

\begin{proof}
Let $F_1$ and $F_2$ be the two $5$-faces of $G$, and let $R = F_1 \cap F_2$. 
By Lemma~\ref{lem_5f}, $R$ is an edge. 
Deleting $R$ merges $F_1$ and $F_2$ into the $8$-face
\[ C' = F_1 \triangle F_2. \]
Moreover, $R$ is an antipodal path of $C'$ with $l(R)=1$. 

For each $i \in \{1, \ldots, k \}$, let $F_i^1$ and $F_i^2$ be the two $7$-faces in the $i$th $7$-face block, 
and let $Q_i = F_i^1 \cap F_i^2$. 
By Lemma~\ref{7-face}, $Q_i$ is a path of length $3$. 
Deleting the edges of $Q_i$ and then removing the isolated internal vertices of $Q_i$ merges $F_i^1$ and $F_i^2$ into the $8$-face 
\[ C_i = F_i^1 \triangle F_i^2.  \]
Since $7$-face blocks are pairwise edge-disjoint, $C_1,\ldots,C_k$ are pairwise edge-disjoint. 
Moreover, $Q_i$ is an antipodal path of $C_i$ with $l(Q_i)=3$. 

All other faces of $G$ are already $8$-faces. 
Since the above operations merge $k+1$ pairs of faces and $f = 3k+1$, the number of faces of $G'$ is $(3k+1)-(k+1)=2k$. 
Thus $G'$ is an $8$-angulation with $2k$ $8$-faces. 

It remains to show that $(G')^*$ is bipartite. 
Since $G'$ is an $8$-angulation with $f(G')=2k$ and $C_1, \ldots, C_k$ are pairwise edge-disjoint, 
every edge of $G'$ is incident with exactly one of $C_1, \ldots, C_k$. 
It follows that $(G')^*$ is bipartite, and $\{ C_1, \ldots, C_k\}$ is one of its bipartition classes. 
\end{proof}

Finally, we prove Theorem~\ref{main}. 
By Lemma~\ref{lem_8gon}, $G'$ is an $8$-angulation, and its plane dual $(G')^*$ is bipartite. 
Since every face of $G'$ is even, $G'$ is bipartite. 
Moreover, $G'$ contains no $(0,3,4 \bmod 6)$-cycles.
Hence every cycle contained in $G'$ is a $(2 \bmod 6)$-cycle.
In particular, we may apply Lemma~\ref{lem_theta_4}, and conclude that $G'$ is a $\theta_4$-graph with $2k$ faces.

Combining this with Lemma~\ref{lem_8gon}, there exist vertices $x,y\in V(G')$ and internally disjoint $(x,y)$-paths $P_1,P_2,\dots,P_{2k}$ of length $4$ such that
\[
G'=P_1\cup P_2\cup \cdots \cup P_{2k},
\]
and the faces of $G'$ are
\[
P_1\cup P_2,\,
P_2\cup P_3,\,
\dots,\,
P_{2k-1}\cup P_{2k},\,
P_{2k}\cup P_1.
\]
Furthermore, there exists an antipodal path $R$ of $P_{2k}\cup P_1$, and for each $1 \le i \le k$, there exists an antipodal path $Q_i$ of $P_{2i-1} \cup P_{2i}$ such that
\[
l(R)=1,\qquad l(Q_i)=3 \ \ (1\le i\le k),
\]
and
\[
G=G'\cup R\cup \bigcup_{i=1}^k Q_i.
\]
See Figure~\ref{fig_G} for an illustration. 

%%%%%%%%%%%%
\begin{figure}[htbp]
\centering
\includegraphics[width = 10cm]{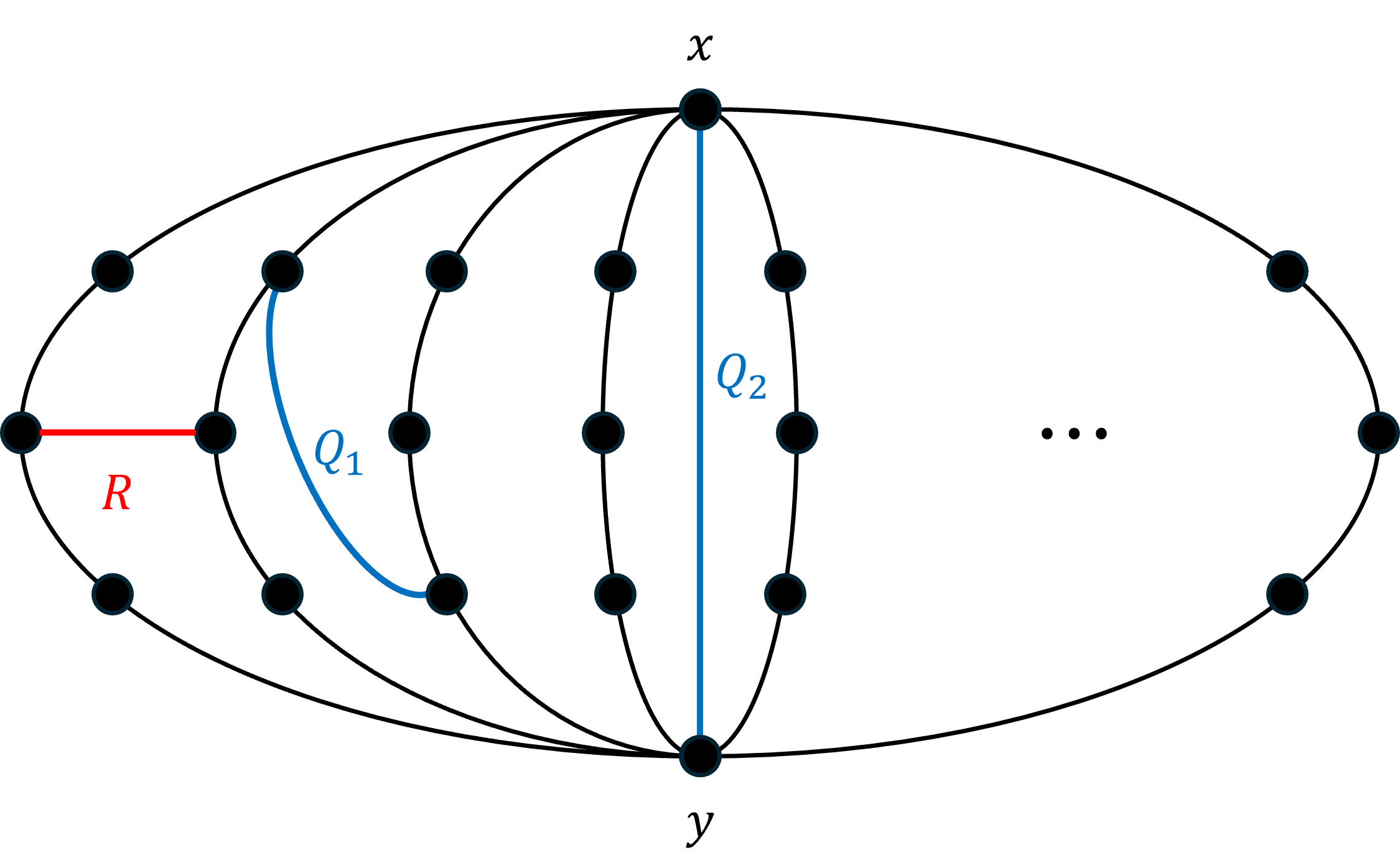}
\caption{The graph $G$.}
\label{fig_G}
\end{figure}
%%%%%%%%%%%%

When $k=1$, the graph $G'$ is isomorphic to an $8$-cycle, and $G$ is obtained from $G'$ by adding antipodal paths of lengths $1$ and $3$ to the two faces of $G'$. 
Up to isomorphism, there are three such graphs, and exactly one of them contains no $(0,3,4 \bmod 6)$-cycle. 
However, that graph is isomorphic to $H_1$, contradicting the assumption on $G$. 
Hence we may assume that $k\ge2$. 

For each $1 \le i \le k$, let $C_{2i-1}',C_{2i}'$ denote the two faces of $G$ separated by $Q_i$. 
Similarly, let $C_1'',C_2''$ denote the two faces of $G$ separated by $R$. 
Moreover, for $1 \le i \le 2k$, let $C_i:=P_{i-1} \cup P_i$ where $P_0:=P_{2k}$. 

Then, for each $1 \le i \le k$,
\[
C_{2i-1}'\cap C_{2i}'=Q_i,\qquad
C_{2i-1}'\triangle C_{2i}'=C_{2i},
\]
and
\[
C_1''\cap C_2''=R,\qquad
C_1''\triangle C_2''=C_1.
\]
Moreover, $C_{2i-1}'$ and $C_{2i}'$ are $7$-faces of $G$, while $C_1''$ and $C_2''$ are $5$-faces of $G$. 
For each $i\in\{1,\dots,2k\}$, let $z_i\in V(P_i)$ satisfy $l(P_i[x,z_i])=2$. 

Fix $1 \le i \le k$.
Since $G$ contains no $9$-cycle, $e(C_{2i-1}' \cap C_{2i-1}) \ne 3$ and $e(C_{2i}' \cap C_{2i-1}) \ne 3$. 
Therefore,
\[
\End(Q_i)\in\bigl\{\{z_{2i-1},z_{2i}\},\{x,y\}\bigr\}.
\]
Let $S := \{1 \le i \le k \mid \End(Q_i) = \{z_{2i-1}, z_{2i} \} \}$, 
and
$T := \{1 \le i \le k \mid \End(Q_i) = \{x,y\} \}$. 
Then $S \cup T=\{1,2,\dots,k\}$. 

Suppose that there exist distinct $i, j \in T$. 
Then $Q_i\cup Q_j$ forms a $6$-cycle, a contradiction. 
Hence $|T| \le 1$. 

Assume that $|T|=1$. 
Then $|S|=k-|T|=k-1 \ge 1$,
and thus there exist $i \in S$ and $j \in T$. 
In this case,
\[
P_{2i}[x,z_{2i}]
\cup Q_i
\cup P_{2i-1}[z_{2i-1},y]
\cup Q_j
\]
forms a $10$-cycle, a contradiction. 
Therefore $T=\emptyset$, and hence $\End(Q_i) = \{z_{2i-1},z_{2i}\}$ 
for every $1 \le i \le k$. 

Finally, we consider $\End(R)$.
If $\End(R) \neq \{ x,y \}$, then it is easy to check that $G$ contains a $(0,3,4 \bmod{6})$-cycle. 
Hence, $\End(R)=\{x,y\}$.  
Therefore, $G$ is isomorphic to $H_k$, contradicting the assumption on $G$.

Hence Theorem~\ref{main} follows. 

\section{The extremal number}
In this section, we prove Corollary~\ref{cor_extremal}. 
\begin{proof}[(Proof of Corollary~\ref{cor_extremal})]
The upper bound follows immediately from Theorem~\ref{main}, since the number of edges is an integer. 

It remains to construct a graph attaining the bound. 
For $n=2$, the graph $H_0 \simeq K_2$ gives the desired construction. 
Thus, assume that $n \ge 3$, and set
\[
  k=\left\lceil\frac{n-2}{8}\right\rceil
  \qquad\text{and}\qquad
  r=8k+2-n.
\]
Then $0\leq r\leq7$. 

Consider one copy $A_1$ of $A$ in $H_k$. 
Label its eight vertices in $V(A_1) \setminus \{ x_k, y_k\}$ as $v_1, v_2, \ldots, v_8$ so that 
\[
  x_k v_3 v_4 v_5 y_k,\qquad
  x_k v_6 v_7 v_8 y_k,\qquad
  v_4 v_1 v_2 v_7
\]
are paths whose edge sets partition $E(A_1)$. 
Let
\[
  G_n=H_k-\{v_1,\ldots,v_r\}.
\]
Since $G_n$ is a subgraph of $H_k$, it contains no $(0,3,4 \bmod 6)$-cycle. 
Moreover, it is easy to check that $|V(G_n)| = n$ and $e(G_n) = \left\lfloor (11/8)n-7/4 \right\rfloor$. 
The result follows. 
\end{proof}
\section{Acknowledgements}
I am deeply grateful to Professor Kenta Ozeki of Yokohama National University for his many valuable suggestions and insightful improvements to the proofs. 
I would like to express my sincere appreciation here.


\begin{thebibliography}{99}
\bibitem{Bai2025}
Y.~Bai, A.~Grzesik, B.~Li and M.~Prorok, 
Cycle lengths in graphs of given minimum degree, 
\emph{Journal of Combinatorial Theory, Series B} 180 (2026), 111--150. 

\bibitem{Bai2026}
Y.~Bai, H.~Chu, B.~Li, B.~Park, H.~Ryu, 
On $2$-connected graphs without cycles of length $1$ modulo $3$, 
\emph{arXiv preprint arXiv:2606.02356}, 2026. 

\bibitem{Li2025}
Y.~Bai, B.~Li, Y.~Pan and S.~Zhang, 
On graphs without cycles of length $1$ modulo $3$, 
\emph{arXiv preprint arXiv:2503.03504}, 2025. 

\bibitem{Bollobas1977}
B.~Bollob\'{a}s, 
Cycles modulo $k$, 
\emph{Bulletin of the London Mathematical Society} 9 (1) (1977), 97--98. 

\bibitem{Cai2001}
X.~Cai and W.~E.~Shreve, 
$(2 \bmod 4)$-cycles, 
\emph{Ars Combinatoria} 60 (2001), 97--129. 

\bibitem{Chen1994}
G.~Chen and A.~Saito, 
Graphs with a cycle of length divisible by three, 
\emph{Journal of Combinatorial Theory, Series B} 60 (2) (1994), 277--292. 

\bibitem{Chu2025}
H.~Chu, B.~Park, H.~Ryu, 
On $2$-connected graphs avoiding cycles of length $0 $ modulo $4$, 
\emph{arXiv preprint arXiv:2507.12798}, 2025. 

\bibitem{Erdos1976}
P.~Erd\H{o}s, 
Some recent problems and results in graph theory, combinatorics, and number theory, 
\emph{Proc. Seventh SE Conf. Combinatorics, Graph Theory and Computing, Utilitas Math} (1976), 3--14.   

\bibitem{Gao2022}
J.~Gao, Q.~Huo, C.~H.~Liu and J.~Ma, 
A unified proof of conjectures on cycle lengths in graphs, 
\emph{International Mathematics Research Notices} 10 (2022), 7615--7653. 

\bibitem{Gao2024}
J.~Gao, B.~Li, J.~Ma and T.~Xie, 
On two cycles of consecutive even lengths, 
\emph{Journal of Graph Theory} 106 (2) (2024), 225--238. 

\bibitem{Gyori2026}
E.~Gy\H{o}ri, B.~Li, N.~Salia, C.~Tompkins, K.~Varga and M.~Zhu, 
On graphs without cycles of length $0$ modulo $4$, 
\emph{Journal of Combinatorial Theory, Series B} 176 (2026), 7--29. 

\bibitem{Simonovits1974}
M.~Simonovits, 
Extremal graph problems with symmetrical extremal graphs. Additional chromatic conditions.
\emph{Discrete Mathematics} 7 (3--4) (1974), 349--376. 

\bibitem{Sudakov2017}
B.~Sudakov and J.~Verstra\"{e}te, 
The extremal function for cycles of length $\ell \bmod k$.
%\emph{The Electronic Journal of Combinatorics} 24 (1) (2017), 1--7. 
\emph{The Electronic Journal of Combinatorics} 24 (1) (2017). 

\end{thebibliography}
\end{document}